\newcommand{\obar}[1]{\overline{#1}}
\newcommand{\ubar}[1]{\underline{#1}}
\theoremstyle{definition}
\newtheorem{definition}{Definition}
\newtheorem{assumption}{Assumption}
\newtheorem{theorem}{Theorem}
\newtheorem{corollary}{Corollary}
\newtheorem{lemma}{Lemma}
\newtheorem{remark}{Remark}
\begin{document}
\title{Convex Restriction of Power Flow Feasible Sets}
\author{Dongchan Lee, Hung D. Nguyen, Krishnamurthy Dvijotham, and Konstantin~Turitsyn
\thanks{
This work was supported by fundings from the U.S. Department of Energy Office of Electricity as part of the DOE Grid Modernization Initiative and the NSF EPCN award 1809314.}}
\maketitle

\begin{abstract}
The convex restriction of the power flow feasible sets identifies the convex subset of power injections where the solution for power flow is guaranteed to exist and satisfy the operational constraints. In contrast to convex relaxations, the convex restriction provides a sufficient condition for power flow feasibility and is particularly useful for problems involving uncertainty in the power generation and demand. In this paper, we present a general framework of constructing convex restriction of an algebraic set defined by equality and inequality constraints and apply the framework to power flow feasibility problem. The procedure results in convex quadratic constraints that provide a sufficiently large region for practical operation of the grid.
\end{abstract}

\begin{IEEEkeywords}
Convex restriction, Brouwer's Fixed Point Theorem, power flow equation, power grid
\end{IEEEkeywords}

\IEEEpeerreviewmaketitle

\section{Introduction}
{\color{black} Power flow equations are at the core of steady-state analysis of the power grid \cite{cutsem98, kundur94}. Optimal Power Flow (OPF), State estimation and security assessment rely on AC power flow equations to model the grid. Power flow equations determine internal states of the system such as voltage magnitude and phase angles given the profile of generation and loads. The nonlinearity of AC power flow equations creates computational bottlenecks and challenges in those studies.

In state estimations and security assessments, the state variables are determined using numerical algorithms such as the Newton-Raphson method or Backward-Forward sweep method. The disadvantage of using numerical algorithms is that they require a deterministic operating point to find the exact state solution. When uncertainties in the generation and load profile are introduced, there is no easy way to tell whether there will be a state solution satisfying the AC power flow equations without running iterative algorithms.

In OPF problems, power flow equations enter as nonlinear equality constraints and result in a non-convex optimization problem, which is NP-hard \cite{lesieutre05} even for radial networks \cite{lehmann16,low14_1}. Convex relaxations of power flow equation have been studied extensively for solving OPF problems \cite{low14_1,lavaei12, coffrin16}. The convex relaxation provides an outer-approximation of the feasibility set, and it is a necessary condition to satisfy the power flow equations. Solving the optimization problem over the relaxed set provides a lower bound on the optimal cost, but the resulting solution may not be feasible and risks the system security \cite{cui17}. Moreover, it provides limited insights and characterization of the feasibility set because the non-convex boundaries inside the feasibility set disappear in convex relaxations \cite{molzahn17}.

This paper is concerned with finding the inner approximation of the feasibility set. The \textit{convex restriction} is a convex subset of the feasibility set, which provides a sufficient condition for satisfying power flow equations with operational constraints. Figure \ref{fig_inner_approx} shows the comparison between the convex relaxation and restriction. The benefit of studying inner approximation is that the security of the system is guaranteed, which is the top priority in the operation of power grids. Moreover, it provides a region where the system is safe to operate, and this region can be used as a metric for robustness against uncertain power injections from renewables and loads. While there are many potential applications of convex restriction, deriving a tractable sufficient condition for the feasibility of power flow equations has remained as a challenge.

\begin{figure}[b]
	\centering
	\includegraphics[width=2.2in]{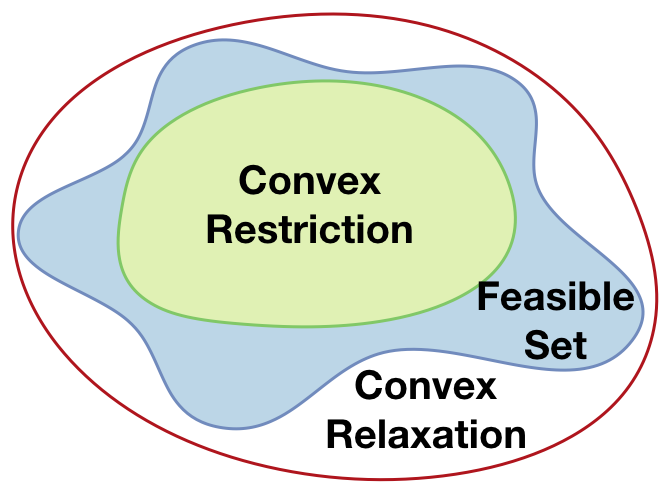}
	\caption{Illustration of the convex restriction and convex relaxation of a non-convex feasibility set}
	\label{fig_inner_approx}
\end{figure}

The search for tractable convex restriction of power flow equations started in \cite{wu82} to find the security region where the system is safe to operate. In recent years, a number of effort has been made in providing the inner approximation of the feasibility set, but there have been severe limitations in terms of its applicability to practical systems. Most of the progress were made with certain modeling assumption such as radial structure \cite{bolognani16,yu15,wang16,wang17,dvijotham17}, lossless network \cite{simpson17_1}, and decoupled power flow model \cite{simpson16}. Recent efforts made significant progress with general meshed networks, but they still suffer from scalability and conservativeness \cite{dvijotham15,dvijotham18,chiang18}. In \cite{nguyen17}, the inner approximation with Brouwer's fixed point theorem showed promising results for general power grid models. One of the limitations of this approach was that it required solving a non-convex optimization problem to construct the convex restriction. In this paper, we alleviate this limitation by describing the set in a lifted space and give a closed form expression.  

We propose an analytical procedure to construct convex restrictions of AC power flow equations with operational constraints. Our technique relies on envelopes over the nonlinearity involved in power flow equations, and these envelopes for common nonlinear functions such as bilinear and trigonometric functions will be provided. Our envelopes show an interesting relation to the QC relaxation for OPF, which employs convex envelopes to contain the nonlinearities \cite{coffrin16}. It will be later shown that the envelopes for restriction have dual features compared to the ones used in relaxations. Moreover, the construction relies on bounds over intervals, which has been studied in interval analysis and uncertainty propagation techniques \cite{moore09,jaulin01}. The interval analysis also deals with finding the inner approximation of sets described by constraints, but the work has been limited to a subclass of problems such as linear equations or decentralized nonlinear equations \cite{goldsztejn05,goldsztejn06,olivier13}. To the best of author's knowledge, there is no tractable method available that computes the inner approximation of a set defined by general nonlinear equality constraints \cite{jaulin01,goldsztejn06}.

Our technique is applied to power systems with a general meshed network without any modification in the system data, and the results are compared with the results obtained by MATPOWER \cite{matpower}. Our approach achieves drastic improvements in terms of conservativeness while remaining scalable to large systems. The main advantages over the existing approaches are summarized below.
\begin{enumerate}
\item The convex restriction is provided with analytical conditions using the local information and does not involve any numerical algorithm. This brings the advantage for real-time security analysis where computational capability is limited.
\item Our method is scalable to large-scale systems. We later show that the number of quadratic constraints grows proportionally to the system size. The convex restriction can be used to solve OPF replacing the non-convex power flow equations.
\item The convex restriction is guaranteed to be non-empty given the system operates in a normal condition. Moreover, the region is non-conservative and provide practical margins for operation. We visualize this region, which shows that the restriction is tight along some of the boundaries in IEEE test cases.
\end{enumerate}
}

Rest of the paper is organized as follows. In Section II, the general formulation of the problem as well as its set up in power flow equations is provided. Section III provides a guideline for constructing convex restrictions for general constraints. Section IV applies the proposed method to power flow equations and visualize the comparison between convex restrictions and true feasibility sets followed by conclusion in Section V.

\section{Convex Restriction of Feasibility Set: Formulation and Preliminaries}
\subsection{General Formulation}
Consider the following general nonlinear equality and inequality constraints with control variables $u\in\mathbf{R}^{m}$ and states variables $x\in\mathbf{R}^{n}$,
\begin{subequations}
\begin{align}
f(x,u)&=0 \label{eqn_feasibility1} \\
h(x,u)&\leq 0 \label{eqn_feasibility2}
\end{align}
\label{eqn_feasibility}
\end{subequations}
where $f:(\mathbf{R}^{n},\mathbf{R}^{m})\rightarrow \mathbf{R}^n$ and $h:(\mathbf{R}^{n},\mathbf{R}^{m})\rightarrow \mathbf{R}^r$ are vectors of functions that are continuous and differentiable. The variables are divided into control variables and internal states. Control variables are the subset of variables that can be determined freely by the system operators. State variables are the subset of decision variables that are determined by the control variables and equality constraint in \eqref{eqn_feasibility1}. 
\begin{remark}
The number of equality constraints and the number of state variables are the same. $x$ could be determined by solving the system of equations if it is solvable.
\end{remark}

Given the constraints and variables, the solvability and feasibility of control variable $u$ are defined as follows.

{\color{black}
\begin{definition}
$u$ is \textit{solvable} if there exists at least one $x$ that satisfies the equality constraint $f(x,u)=0$.
\end{definition}

\begin{definition}
$u$ is \textit{feasible} if there exists at least one $x$ that satisfies $f(x,u)=0$ and $h(x,u)\leq 0$.
\end{definition}

Feasibility sets and solvability sets refer to the set of all feasible and solvable control variables $u$. Nonlinear equality constraints create a nonlinear manifold in the space of $(x,u)$, and a singleton is the only possible convex inner approximation in a general nonlinear manifold. Instead of working with both $x$ and $u$, the feasibility set is defined as a projection of the nonlinear manifold onto the control variable space. This set is generally non-convex, and the goal of this paper is to find the convex restriction inside the projection of the nonlinear manifold. The construction of the convex restriction relies on the following assumptions.

\begin{assumption}
There is a known point $(x_0,u_0)$ that satisfies the followings:
\begin{enumerate}
    \item $f(x_0,u_0)=0,\, h(x_0,u_0)\leq0$, and
    \item $J_{f,0}=\frac{\partial f}{\partial x}\big|_{x_0,u_0}$ is non-singular.
\end{enumerate}
The known operating point $(x_0,u_0)$ will be referred as the base point.
\label{assumption1}
\end{assumption}

\begin{remark}
From Implicit Function Theorem, there exists an open neighborhood of solvability set around $u_0$ when Assumption \ref{assumption1} is satisfied.
\end{remark}
The first condition in Assumption \ref{assumption1} guarantees that the feasibility set is non-empty by enforcing the convex restriction to contain the base point.

\begin{assumption}
Nonlinear Equations have a sparse nonlinear representation. Namely there exists a basis function $\psi:(\mathbf{R}^{n},\mathbf{R}^{m})\rightarrow \mathbf{R}^q$ such that
\begin{equation}
\begin{aligned}
f(x,u)&=M\psi(x,u) \\
h(x,u)&=L\psi(x,u)
\end{aligned}
\label{eqn_psi}
\end{equation}
where $M\in\mathbf{R}^{n\times q}$ and $L\in\mathbf{R}^{r\times q}$ are constant matrices. Moreover, each $\psi_k$ is a function of small subset of $\{x_1,...,x_n\}$.
\label{assumption2}
\end{assumption}

Assumption \ref{assumption2} is necessary to ensure the scalability of the convex restriction. Implications of these assumptions in the context of power flow equations will be discussed later.}

\subsection{Power Flow Equation and Operational Constraints}
Consider a power network as a directed graph $\mathcal{G}(\mathcal{N},\mathcal{E})$ where each node in $\mathcal{N}$ represents a bus, and each edge in $\mathcal{E}\subseteq\mathcal{N}\times \mathcal{N}$ represents a transmission line. For each transmission line $l$, we will denote its \textit{from} bus with superscript $\textrm{f}$, and its \textit{to} bus as superscript $\textrm{f}$. $\mathcal{N}_\textrm{slack}$ denotes the slack bus with fixed voltage magnitude and phase angle, and $\mathcal{N}_\textrm{ns}=\mathcal{N}\backslash\mathcal{N}_\textrm{slack}$ denotes the set of non-slack buses. The set of PV buses and PQ buses are denoted by $\mathcal{N}_\textrm{pv}$ and $\mathcal{N}_\textrm{pq}$, respectively. Set of generator buses are denoted by $\mathcal{N}_G=\mathcal{N}_\textrm{pv}\cup\mathcal{N}_\textrm{slack}$. Consider the following AC power flow equations in polar coordinates with operational constraints:
\begin{equation}
\begin{aligned}
p_i^\textrm{inj}&=\sum_{k\in \mathcal{N}} v_iv_k(G_{ik}\cos\theta_{ik}+B_{ik}\sin\theta_{ik}), \ \ i\in\mathcal{N}, \\
q_i^\textrm{inj}&=\sum_{k\in \mathcal{N}} v_iv_k(G_{ik}\sin\theta_{ik}-B_{ik}\cos\theta_{ik}), \ \ i\in\mathcal{N},
\end{aligned}
\label{eqn_pf}
\end{equation}
\begin{subequations}
\begin{align}
q_i^\textrm{min}\leq&q_i^\textrm{inj}\leq q_i^\textrm{max}, \ & i&\in\mathcal{N}_G \\
v_i^\textrm{min}\leq&v_i\leq v_i^\textrm{max}, \ & i&\in\mathcal{N}_\textrm{pq} \\
\varphi^\textrm{min}_l\leq&\theta_l^\textrm{f}-\theta_l^\textrm{t}\leq\varphi^\textrm{max}_l, \ & l&\in\mathcal{E}.
\end{align}
\label{eqn_pf_feasibility}
\end{subequations}
{\color{black}where $p_i$ and $q_i$ are the active and reactive power injection, and $\theta_i$ and $v_i$ are the phase angle and voltage magnitude at bus $i$. $\theta_l^\textrm{f}-\theta_l^\textrm{t}$ denotes the phase difference between from and to end of the transmission line $l$. The operational constraints considered here are reactive power limits and voltage magnitude limits at the generators and phase angle difference limits on transmission lines. 

In the steady-state analysis of power grids, the system operator has control over the generators, which is denoted by $u$. In this paper, the feasibility of active power injection at non-slack buses will be considered so that $u=p_\textrm{ns}^\textrm{inj}$. The reactive power injection at PQ buses and voltage magnitude at PV buses are assumed to be fixed to constant values although the framework can be extended to include them. The corresponding internal states are 
$x=\begin{bmatrix} \theta_\textrm{ns}^T & v_\textrm{pq}^T \end{bmatrix}^T$. The system operators need to decide the set the control variable subject to the power flow feasibility set in equation \eqref{eqn_pf} and \eqref{eqn_pf_feasibility}. Our objective is to find a non-conservative subset around some known operating point.

The known operating point in Assumption \ref{assumption1} can be naturally chosen by the current operating point. It implies that
\begin{enumerate}
    \item the system is operating at a normal condition where the operational constraints are respected, and
    \item the system is not operating at the solvability boundary of the power flow equation.
\end{enumerate}
Assumption 2 is naturally satisfied for the power flow equations because it can be decomposed by the nonlinearity involved in transmission lines and the shunt elements. The basis functions can be chosen to be $v_iv_k\cos(\theta_{ik})$ and $v_iv_k\sin(\theta_{ik})$ for each transmission line and the voltage magnitude squares. Since the electric grid is a sparsely connected network, the number of basis functions grows approximately proportionally with respect to the system size.

\subsection{Fixed Point Representation}
The power flow equations can be converted into an equivalent fixed point form inspired by the Newton-Raphson method. Let us define the residues of basis functions around the nominal operating point as follows:
\begin{equation}
g(x,u)=\psi(x,u)-J_{\psi,0}x.
\label{eqn_f}
\end{equation}
where $J_{\psi,0}=\frac{\partial \psi}{\partial x}\big|_{x_0,u_0}$. Note that the power flow Jacobian is a linear transformation of the basis function Jacobian (i.e. $J_{f,0}=MJ_{\psi,0}$). The equality constraint can be written as
\begin{equation}
f(x,u)=J_{f,0}x+Mg(x,u),
\end{equation}
where $Mg(x)$ represents higher order terms of $f(x,u)$ after linearization. From Assumption \ref{assumption1}, the power flow Jacobian is invertable, and the equality constraint can be written in the following fixed point form:
\begin{equation}
x=-J_{f,0}^{-1}Mg(x,u).
\label{eq:fixed-point}
\end{equation}
The fixed point condition in Equation \eqref{eq:fixed-point} is an equivalent constraint to the equality condition in \eqref{eqn_feasibility1}.

\begin{remark}
The fixed point form in Equation \eqref{eq:fixed-point} is in the same form as a Newton-Raphson iteration.
\end{remark}

The Newton-Raphson method is one of the most popular algorithms for solving nonlinear equations including steady-state power flow equations \cite{mehta16}. This is widely used in practice due to its fast convergence to the solution given a good initial guess. The difference here is that the Jacobian is fixed at the base operating point while the Newton-Raphson method updates the Jacobian at every iteration. Newton-Raphson fixed point form converges quadratically in the vicinity of the solution, and it plays an important role in deriving solvability condition.}

\section{Derivation of Convex Restriction}
In this section, we describe the procedure for constructing the convex restriction for given equality and inequality constraints.
\subsection{Convex Restriction of Inequality Constraints}
First, let us consider the convex restriction of inequality constraints and ignore equality constraints. This case is much straight forward than the convex restriction with equality constraints. Suppose a vector of functions $\obar{h}(x,u)$ and $\ubar{h}(x,u)$ establishing bounds on individual components:
\begin{equation}
    \ubar{h}_k(x,u) \leq h_k(x, u) \leq \obar{h}_k(x, u).
    \label{eq:h_estimator}
\end{equation}
$\ubar{h}_k(x,u)$ and $\obar{h}_k(x, u)$ are referred as the under-estimator and over-estimator of $h_k(x, u)$, respectively. Following Lemma shows an interesting comparison between the convex restriction and convex relaxation of inequality constraints.

\begin{lemma}
Suppose under and over-estimators $\ubar{h}_k(x,u)$ and $\obar{h}_k(x, u)$ are convex functions. If $(x,u)$ is feasible for $h(x,u)\leq 0$, then
\begin{equation}
\ubar{h}(x,u)\leq 0,
\end{equation}
and the above condition forms the convex relaxation of the feasibility set.
If
\begin{equation}
\obar{h}(x,u)\leq 0,
\end{equation}
then $(x,u)$ is feasible for $h(x,u)\leq 0$, and the above condition forms the convex restriction of the feasibility set.
\label{lemma_ineq_convrestr}
\end{lemma}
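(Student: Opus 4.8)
The plan is to treat the two halves of the statement as mirror images, each following from the chain of pointwise inequalities in \eqref{eq:h_estimator} together with two elementary facts about convexity: that the $0$-sublevel set of a convex function is convex, and that a finite intersection of convex sets is convex. Throughout, since equality constraints are being ignored, I read ``the feasibility set'' as $\mathcal{F} = \{(x,u) : h(x,u)\leq 0\}$, i.e. $\{(x,u): h_k(x,u)\leq 0 \text{ for all } k\}$.

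For the relaxation half, I would start from a feasible point $(x,u)\in\mathcal{F}$, so $h_k(x,u)\leq 0$ for every $k$. Combining this with the left inequality in \eqref{eq:h_estimator}, $\ubar{h}_k(x,u)\leq h_k(x,u)\leq 0$, which gives $\ubar{h}(x,u)\leq 0$ componentwise; hence $\mathcal{F}\subseteq \{(x,u):\ubar{h}(x,u)\leq 0\}$. To see the containing set is convex, note each $\{(x,u):\ubar{h}_k(x,u)\leq 0\}$ is the $0$-sublevel set of the convex function $\ubar{h}_k$ and is therefore convex, and $\{(x,u):\ubar{h}(x,u)\leq 0\}$ is the intersection of these sets over $k$, hence convex. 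A convex superset of $\mathcal{F}$ is exactly a convex relaxation.

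For the restriction half, I would argue symmetrically using the right inequality in \eqref{eq:h_estimator}: if $\obar{h}(x,u)\leq 0$, then for each $k$, $h_k(x,u)\leq\obar{h}_k(x,u)\leq 0$, so $(x,u)\in\mathcal{F}$; thus $\{(x,u):\obar{h}(x,u)\leq 0\}\subseteq\mathcal{F}$. Convexity of this subset follows exactly as before, now using convexity of each $\obar{h}_k$. A convex subset of $\mathcal{F}$ is a convex restriction, completing the proof.

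I do not anticipate a genuine obstacle here; the content is entirely in unwinding the definitions, and the only point that warrants an explicit sentence is the justification that sublevel sets of convex functions are convex and that convexity is preserved under intersection, which is why the hypothesis that $\ubar{h}_k$ and $\obar{h}_k$ are convex (rather than merely valid estimators) is needed.
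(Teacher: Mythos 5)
Your proposal is correct: the chain of estimator inequalities gives the set containments, and convexity of the $0$-sublevel sets of the convex functions $\ubar{h}_k$, $\obar{h}_k$ together with closure of convexity under intersection gives the convexity claims, which is exactly the content of Lemma~\ref{lemma_ineq_convrestr}. The paper itself offers no explicit proof (it treats the lemma as immediate and illustrates it in Figure~\ref{fig_inequality_env}), and your argument is the standard unwinding it implicitly relies on, so there is nothing further to reconcile.
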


\begin{figure}[!htbp]
	\centering
	\includegraphics[width=3.3in]{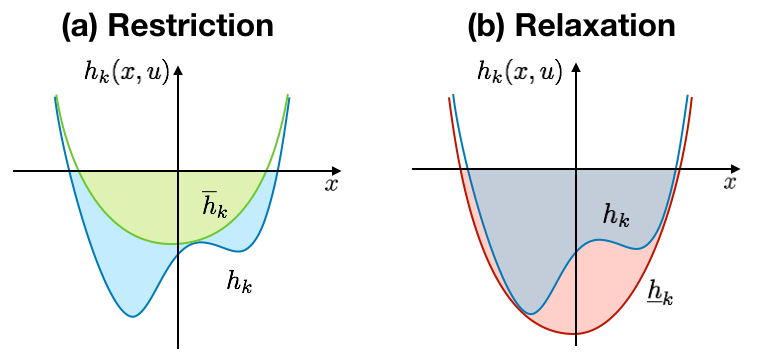}
	\caption{The relaxation and restriction of inequality constraints.}
	\label{fig_inequality_env}
\end{figure}

Lemma \ref{lemma_ineq_convrestr} shows a simple contrast between the relaxation and restriction, and Figure \ref{fig_inequality_env} graphically illustrates their differences. One observation is that the relaxation requires an envelope that encloses a convex set while the restriction requires its complementary space to enclose a convex set.

In this paper, \textit{convex envelopes} refer to the convex over-estimator and concave under-estimator, and \textit{concave envelopes} refer to the concave over-estimator and convex under-estimator. Examples of these envelopes are shown in Figure \ref{fig_convex_concave_env}. The convex envelope encloses a convex region, and it is widely used in convex relaxations of non-convex optimization problems \cite{coffrin16,tawarmalani13}. As it was shown in Lemma \ref{lemma_ineq_convrestr}, it turns out that concave envelopes are necessary for constructing convex restriction of inequality constraints. Later, we will show that even for the restriction of nonlinear equality constraints, concave envelopes need to be used to enforce convexity to the inner approximation.

\begin{figure}[!htbp]
	\centering
	\includegraphics[width=3.3in]{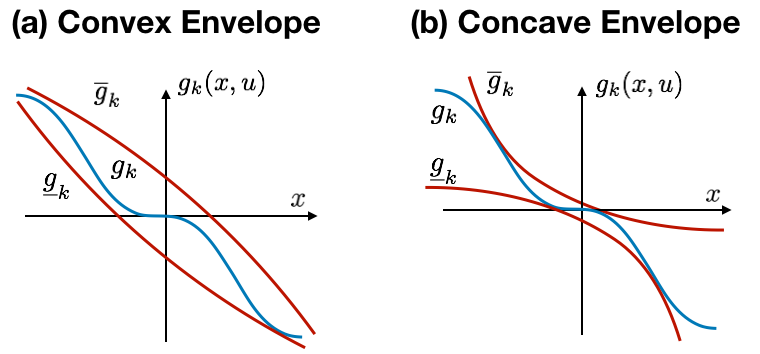}
	\caption{Examples of the convex and concave envelope.}
	\label{fig_convex_concave_env}
\end{figure}

\subsection{Preliminaries for Convex Restriction of Equality Constraints}

In this section, the convex restriction of equality constraints will be presented. The derivation will rely on Brouwer's Fixed Point Theorem, which provides a sufficient condition for the solvability of the equality constraint. Given the fixed point equation in \eqref{eq:fixed-point}, the theorem states the following.

\begin{theorem}
(\textbf{Brouwer's Fixed Point Theorem}) Let $F: \mathcal{P}\rightarrow\mathcal{P}$ be a continuous map where $\mathcal{P}$ is a compact and convex set in $\mathbf{R}^n$. Then the map has a fixed point in $\mathcal{P}$, namely $x=F(x)$ has a solution in $x\in\mathcal{P}$.
\end{theorem}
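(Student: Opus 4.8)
The statement is the classical Brouwer Fixed Point Theorem, so the plan is to indicate a self-contained route to it rather than merely to cite it. The first step is a reduction: it suffices to prove the theorem when $\mathcal{P} = B$, the closed Euclidean unit ball in $\mathbf{R}^n$. Indeed, for a general compact convex $\mathcal{P}$, choose a closed ball $B$ with $\mathcal{P} \subseteq B$ and let $\pi : B \to \mathcal{P}$ be the nearest-point projection; because $\mathcal{P}$ is closed and convex, $\pi$ is well defined and continuous (in fact $1$-Lipschitz). The composite $F \circ \pi : B \to \mathcal{P} \subseteq B$ is a continuous self-map of the ball, so by the ball case it has a fixed point $x^\star = F(\pi(x^\star))$. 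Since the range of $F$ lies in $\mathcal{P}$, we get $x^\star \in \mathcal{P}$, hence $\pi(x^\star) = x^\star$, and therefore $x^\star = F(x^\star)$.

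For the ball case I would argue by contradiction through the \emph{no-retraction theorem}. Suppose $F : B \to B$ is continuous with $F(x) \neq x$ for every $x \in B$. For each $x$, follow the ray that starts at $F(x)$ and passes through $x$ until it meets the boundary sphere $S^{n-1}$; this intersection point is unique, and the resulting map $r : B \to S^{n-1}$ is continuous in $x$ and satisfies $r(x) = x$ for $x \in S^{n-1}$, i.e.\ $r$ is a continuous retraction of $B$ onto $S^{n-1}$. The heart of the argument is then the assertion that no such retraction exists, and I expect this to be the main obstacle, since it is genuinely topological and cannot be reached by soft, analysis-free reasoning.

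To prove the no-retraction theorem I would take Milnor's analytic route. First approximate $r$ by a $C^1$ map and perturb it slightly to obtain a $C^1$ retraction of $B$ onto $S^{n-1}$ that equals the identity in a neighborhood of $S^{n-1}$. Then for $t \in [0,1]$ set $r_t(x) = (1-t)x + t\,r(x)$ and $V(t) = \int_B \det Dr_t(x)\, dx$. Since $Dr_t = (1-t)I + t\,Dr$ is affine in $t$, the function $V$ is a polynomial in $t$. For $t$ small, $r_t$ has Jacobian close to $I$ and (being the identity near the boundary) maps $B$ diffeomorphically onto $B$, so $V(t) = \mathrm{vol}(B) > 0$; hence the polynomial $V$ is the constant $\mathrm{vol}(B)$ on all of $[0,1]$. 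But $r_1 = r$ has image in $S^{n-1}$, a set of measure zero, so $\det Dr(x) \equiv 0$ and $V(1) = 0$ --- a contradiction. (If one is willing to invoke algebraic topology, a faster alternative is that a retraction would exhibit $\mathbf{Z} \cong H_{n-1}(S^{n-1})$ as a direct summand of $H_{n-1}(B) = 0$; or one could give a purely combinatorial proof via Sperner's lemma together with a compactness and limiting argument. I would pick whichever machinery the surrounding development is prepared to assume.)
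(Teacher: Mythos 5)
Your proposal is correct in outline, but there is nothing in the paper to compare it against: the paper states Brouwer's Fixed Point Theorem purely as a classical, cited black box (it is the engine behind Lemma~2 and the self-mapping conditions that follow), and offers no proof of it. What you give is the standard self-contained route: reduce to the closed ball by composing $F$ with the nearest-point projection $\pi$ onto the nonempty compact convex set $\mathcal{P}$ (well defined and $1$-Lipschitz precisely because $\mathcal{P}$ is closed and convex), convert a hypothetical fixed-point-free self-map of the ball into a retraction $r:B\to S^{n-1}$ via the ray from $F(x)$ through $x$, and rule out such a retraction by Milnor's polynomial-volume argument. The reduction step and the retraction construction are complete as written; the place where your sketch is lighter than a full proof is the analytic core: (i) the smoothing step must be arranged so that the perturbed map is still a retraction onto $S^{n-1}$ equal to the identity near the boundary, which takes a short but nontrivial construction, and (ii) the claim that $r_t$ maps $B$ diffeomorphically onto $B$ for small $t$ needs the injectivity estimate from the Lipschitz bound on $Dr$ plus an open-and-closed (or degree/inverse-function) argument for surjectivity, together with the observation that convexity of $B$ gives $r_t(B)\subseteq B$. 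These are standard and fixable, so the proposal stands as a legitimate proof sketch; it simply supplies machinery the paper deliberately leaves to the literature, and for the purposes of this paper a citation to a standard reference would serve the same role.
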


Brouwer's Fixed Point Theorem provides a sufficient condition for the existence of a solution in the internal states. The control variables $u$ can be considered as external parameters changing the fixed point equation in \eqref{eq:fixed-point}, which leads to the following Lemma.

\begin{lemma}
If $-J_{f,0}^{-1}Mg(x,u)\in\mathcal{P}$ for all $x\in\mathcal{P}$, then $u$ is solvable and has at least one solution in $x\in\mathcal{P}$.
\label{lemma_fxpt}
\end{lemma}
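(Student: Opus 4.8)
The plan is to recognize that Lemma~\ref{lemma_fxpt} is essentially a direct corollary of Brouwer's Fixed Point Theorem applied to the specific map arising from the fixed-point reformulation of the power flow equations, with the control variable $u$ frozen at a chosen value. The only work is to check that the hypotheses of Brouwer's theorem are met and then translate the conclusion back into the language of solvability.

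First I would fix an arbitrary solvable candidate $u$ and define the map $F_u : \mathcal{P} \to \mathbf{R}^n$ by $F_u(x) = -J_{f,0}^{-1} M g(x,u)$. By Assumption~\ref{assumption1}, $J_{f,0}$ is nonsingular, so $J_{f,0}^{-1}$ exists; since $g$ is built from $\psi$ (continuous by the smoothness assumption on $f$, $h$) minus a linear term, $g(\cdot,u)$ is continuous, hence $F_u$ is continuous on $\mathcal{P}$. Next I would invoke the hypothesis of the lemma: $F_u(x) \in \mathcal{P}$ for every $x \in \mathcal{P}$, which says exactly that $F_u$ maps $\mathcal{P}$ into itself, so $F_u : \mathcal{P} \to \mathcal{P}$ is a well-defined continuous self-map. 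Then, provided $\mathcal{P}$ is compact and convex (which I would note is the standing assumption on the candidate set $\mathcal{P}$ used throughout this subsection), Brouwer's Fixed Point Theorem yields a point $x^\star \in \mathcal{P}$ with $x^\star = F_u(x^\star)$.

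Finally I would close the argument by recalling from Equation~\eqref{eq:fixed-point} that the fixed-point condition $x = -J_{f,0}^{-1} M g(x,u)$ is equivalent to the equality constraint $f(x,u) = 0$. Therefore $x^\star$ satisfies $f(x^\star, u) = 0$ with $x^\star \in \mathcal{P}$, which by Definition~2 means $u$ is solvable and, moreover, admits a solution lying in $\mathcal{P}$.

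The main (and essentially only) subtlety is making sure the ingredients required by Brouwer's theorem are genuinely in place: continuity of $F_u$ relies on the differentiability assumptions already imposed on $f$ and $h$, and the invertibility of $J_{f,0}$ is guaranteed by Assumption~\ref{assumption1}; the compactness and convexity of $\mathcal{P}$ are properties that must hold for $\mathcal{P}$ to be an admissible restriction set and so are assumed here. There is no real obstacle beyond this bookkeeping — the lemma is a translation step that sets up the template whose hypothesis (self-mapping of $\mathcal{P}$) the rest of the paper will discharge by constructing an explicit convex $\mathcal{P}$ via the envelope bounds.
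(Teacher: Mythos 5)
Your proposal is correct and follows exactly the paper's argument: define $F(x)=-J_{f,0}^{-1}Mg(x,u)$, note the hypothesis makes it a continuous self-map of the compact convex set $\mathcal{P}$, apply Brouwer's Fixed Point Theorem, and use the equivalence of the fixed-point form \eqref{eq:fixed-point} with $f(x,u)=0$ to conclude solvability. Your version simply spells out the continuity and invertibility bookkeeping that the paper leaves implicit.
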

\begin{proof}
Let $F(x)=-J_{f,0}^{-1}Mg(x,u)$. Then, there exist a solution $x\in\mathcal{P}$ from Brouwer's Fixed Point Theorem.
\end{proof}

The existence of any self-mapping set guarantees the existence of a state solution, and the self-mapping set is not unique. This brings the idea of proposing a class of convex and compact set parametrized by some variable denoted by $b\in\mathbf{R}^p$. Instead of finding a single self-mapping set, a class of set can be used to check the Brouwer's Fixed Point condition, and the solvability region will be the union of all control variables that have a self-mapping set in the state space. The self-mapping set will be denoted by $\mathcal{P}(b)$ to show that it is parametrized by $b$. Then, the existence of $b$ such that $-J_{f,0}^{-1}Mg(\mathcal{P}(b),u)\subseteq\mathcal{P}(b)$ is sufficient for the Brouwer's Fixed Point condition. This idea can be interpreted as lifting the optimization variables to include additional variable $b$ where the construction of convex restriction is less conservative.

\subsection{Self-mapping with a Polytope Set}
While the self-mapping set can be any convex and compact set, a polytope will be considered in this paper. There is a significant computational advantage of using polytope because the set is described by inequality constrains involving just linear transformations. Let us consider a non-empty compact polytope set $\mathcal{P}$,
\begin{equation}
\mathcal{P}(b)=\{x \mid Ax\leq b\},
\label{eq_polytope}
\end{equation}
where $A\in\mathbf{R}^{p\times n}$ is a constant matrix, and $b\in\mathbf{R}^{p}$ is a vector of variables. The matrix $A$ is chosen such that it forms intervals that bounds the nonlinearity involved in the basis functions. For example, $\sin(\theta^\textrm{f}-\theta^\textrm{t})$ can be effectively bounded by choosing $A$ to be the incidence matrix. When the angle difference $\theta^\textrm{f}-\theta^\textrm{t}=E^T\theta$ has tight upper and lower bounds, then $\sin(\theta^\textrm{f}-\theta^\textrm{t})$ can be also tightly bounded. By fixing $A$ to be a constant matrix, the linear transformation does not introduce any extra complexity.

Lemma \ref{lemma_fxpt} provides a sufficient condition for the existence of internal states in $\mathcal{P}$. The condition can be extended to include inequality constraint by ensuring the self-mapping set resides inside the inequality constraints. If $h(u,x)\leq0$ for all $x\in\mathcal{P}$, the internal state solution should also satisfy $h(u,x)\leq0$. The self-mapping condition and the feasibility condition are illustrated in Figure \ref{fig_self_mapping}, and this is stated formally in the following Lemma.

\begin{figure}[!htbp]
	\centering
	\includegraphics[width=2.0in]{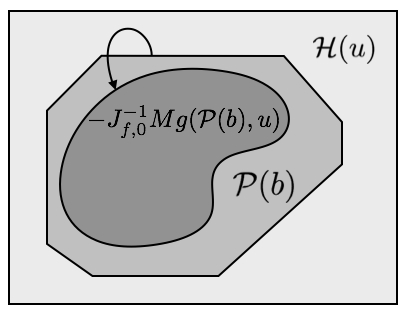}
	\caption{The self-mapping is illustrated in this figure in the domain of $\mathcal{X}$. Here, $\mathcal{H}(u)=\{x\mid h(x,u)\leq0\}$, and existence of the self-mapping set $\mathcal{P}(b)$ ensures solvability and feasibility of $u$.}
	\label{fig_self_mapping}
\end{figure}

{\color{black} 
\begin{lemma}
$u$ is feasible and there exists a corresponding state solution that satisfies $x\in\mathcal{P}(b)$ if there exists $b\in\mathbf{R}^p$ such that
\begin{equation}
\begin{aligned}
\forall & x\in \mathcal{P}(b), \ -J_{f,0}^{-1}Mg(x,u)\in\mathcal{P}(b), \\
\forall & x\in \mathcal{P}(b), \ h(x,u)\leq 0.
\end{aligned}
\label{eqn_lift}
\end{equation}

\label{lemma_lift}
\begin{proof}
The first condition ensures the self-mapping under the map $x\rightarrow -J_{f,0}^{-1}Mg(x,u)$, and thus there exists a solution $u$ for the equality constraint in equation \eqref{eqn_feasibility1} with $x\in\mathcal{P}(b)$ by Lemma \ref{lemma_fxpt}. The second condition ensures $\mathcal{P}(b)$ belongs to the feasible set for inequality constraint in equation \eqref{eqn_feasibility2}. $u$ satisfies both constraint in \eqref{eqn_feasibility1} and \eqref{eqn_feasibility2}, and thus belongs to the feasibility set.
\end{proof}
\end{lemma}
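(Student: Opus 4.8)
The plan is to reduce Lemma~\ref{lemma_lift} directly to the two tools already assembled: Lemma~\ref{lemma_fxpt} (Brouwer for the equality constraint) and the elementary observation about inequality constraints used in Lemma~\ref{lemma_ineq_convrestr}. Suppose a $b\in\mathbf{R}^p$ exists satisfying the two conditions in \eqref{eqn_lift}. Since $\mathcal{P}(b)=\{x\mid Ax\le b\}$ is a polytope, it is convex and closed; I would first note that it is also bounded (hence compact) either by the standing assumption that $\mathcal{P}$ is a non-empty compact polytope, or, if one wants to be careful, by observing that the self-mapping condition forces the continuous map $F(x)=-J_{f,0}^{-1}Mg(x,u)$ to map $\mathcal{P}(b)$ into itself, and Brouwer requires compactness as a hypothesis anyway. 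So $\mathcal{P}(b)$ is a compact convex subset of $\mathbf{R}^n$.

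Next I would apply Lemma~\ref{lemma_fxpt} with $\mathcal{P}=\mathcal{P}(b)$: the first line of \eqref{eqn_lift} is exactly the hypothesis ``$-J_{f,0}^{-1}Mg(x,u)\in\mathcal{P}$ for all $x\in\mathcal{P}$,'' so Lemma~\ref{lemma_fxpt} yields a point $x^\star\in\mathcal{P}(b)$ with $x^\star=-J_{f,0}^{-1}Mg(x^\star,u)$. By the equivalence established around \eqref{eq:fixed-point} — that the fixed point form is equivalent to $f(x,u)=0$ — this $x^\star$ satisfies $f(x^\star,u)=0$, so $u$ is solvable with a solution lying in $\mathcal{P}(b)$. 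Then the second line of \eqref{eqn_lift} says $h(x,u)\le0$ for \emph{every} $x\in\mathcal{P}(b)$; in particular it holds at $x=x^\star$. Hence $x^\star$ satisfies both $f(x^\star,u)=0$ and $h(x^\star,u)\le0$, which is precisely the definition of $u$ being feasible, and moreover $x^\star\in\mathcal{P}(b)$, giving the ``corresponding state solution in $\mathcal{P}(b)$'' claim.

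There is essentially no hard part here: the lemma is a clean packaging of Brouwer plus the trivial fact that a universally-quantified inequality over a set holds at any particular member of that set. The only place to be slightly attentive is the compactness of $\mathcal{P}(b)$ — one must either invoke the earlier stipulation that $\mathcal{P}$ is a non-empty compact polytope or add the boundedness as part of the definition of $\mathcal{P}(b)$; since Lemma~\ref{lemma_fxpt} and Brouwer both presuppose a compact convex domain, the statement is implicitly restricted to those $b$ for which $\mathcal{P}(b)$ is non-empty and bounded, and I would state the proof under that understanding. The continuity of $F$ needed for Brouwer follows from the continuity and differentiability of $f$ (hence of $g$ and $\psi$) assumed at the outset. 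With those two observations noted, the proof is the two-sentence chain above.
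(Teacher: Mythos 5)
Your proof is correct and follows essentially the same route as the paper: apply Lemma~\ref{lemma_fxpt} (Brouwer) to the first condition of \eqref{eqn_lift} to obtain a fixed point $x^\star\in\mathcal{P}(b)$ solving $f(x^\star,u)=0$, then specialize the second condition to $x^\star$ to conclude feasibility. Your added remarks on compactness of $\mathcal{P}(b)$ and continuity of the map are sound points of care that the paper handles implicitly via its stipulation that $\mathcal{P}(b)$ is a non-empty compact polytope.
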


Notice that the conditions are described as an intersection of two containment conditions on the self-mapping set. The self-mapping condition for solvability can be re-written as the following condition.

\begin{lemma}
$u$ is solvable and there exists a corresponding state solution that satisfies $x\in\mathcal{P}(b)$ if there exists some $b\in\mathbf{R}^p$
\begin{equation}
\max_{x\in\mathcal{P}(b)} Kg(x,u)\leq b
\label{eqn_ADfb}
\end{equation}
where $K=-AJ_{f,0}^{-1}M$.
\label{lemma_ADfb}
\end{lemma}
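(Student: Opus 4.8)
The plan is to reduce the inequality in \eqref{eqn_ADfb} to exactly the hypothesis of Lemma \ref{lemma_fxpt} by a direct algebraic manipulation of the halfspace description of $\mathcal{P}(b)$, and then invoke Brouwer's theorem through that lemma. No new fixed-point machinery is needed.

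First I would unpack the self-mapping requirement using \eqref{eq_polytope}. For a given $x$, the point $y := -J_{f,0}^{-1}Mg(x,u)$ lies in $\mathcal{P}(b)$ if and only if $Ay \leq b$, i.e. $-AJ_{f,0}^{-1}Mg(x,u) \leq b$. Since $K = -AJ_{f,0}^{-1}M$, this is precisely $Kg(x,u) \leq b$. Hence the condition "$-J_{f,0}^{-1}Mg(x,u)\in\mathcal{P}(b)$ for all $x\in\mathcal{P}(b)$" from Lemma \ref{lemma_fxpt} is literally the statement that $Kg(x,u)\le b$ holds for every $x\in\mathcal{P}(b)$.

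Second I would rewrite this universally quantified vector inequality as a maximization: a componentwise inequality $v(x)\leq b$ holds for all $x$ in a set $S$ if and only if, for each coordinate $j$, $\sup_{x\in S} v_j(x)\leq b_j$, which is exactly the compact notation $\max_{x\in\mathcal{P}(b)}Kg(x,u)\leq b$. At this point I would note that the supremum is actually attained: $g$ is continuous (through the continuity of $\psi$), and by the choice of $A$ the set $\mathcal{P}(b)$ is a non-empty compact polytope, so it is in particular compact and convex — exactly the hypotheses Brouwer's theorem requires. Having verified the hypothesis of Lemma \ref{lemma_fxpt} for this $\mathcal{P}(b)$, I conclude that $u$ is solvable with a state solution $x^\star\in\mathcal{P}(b)$.

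There is essentially no deep obstacle here; the only points needing care are bookkeeping ones. I would make sure the sign in $K=-AJ_{f,0}^{-1}M$ is carried consistently through the halfspace inequality $Ay\le b$, and I would state explicitly that $\mathcal{P}(b)$ is assumed non-empty and bounded (which is guaranteed by the construction of $A$), so that the maximum in \eqref{eqn_ADfb} is well defined and Brouwer's theorem is applicable. With these observations in place, the proof is a one-line appeal to Lemma \ref{lemma_fxpt}.
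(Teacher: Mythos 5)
Your proposal is correct and follows essentially the same route as the paper: unpack the halfspace description $Ax\leq b$ of $\mathcal{P}(b)$ to see that \eqref{eqn_ADfb} is exactly the self-mapping condition $-J_{f,0}^{-1}Mg(x,u)\in\mathcal{P}(b)$ for all $x\in\mathcal{P}(b)$, then invoke Brouwer's theorem through Lemma \ref{lemma_fxpt}. The only cosmetic difference is that the paper cites Lemma \ref{lemma_lift} rather than Lemma \ref{lemma_fxpt} at the final step, and your explicit remarks about compactness, non-emptiness, and attainment of the maximum are sound bookkeeping that the paper leaves implicit.
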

\begin{proof}
The above condition is a sufficient condition to $-AJ_{f,0}^{-1}Mgf(x,u)\leq b$ for all $x\in\mathcal{P}(b)$, which shows the self-mapping of $\mathcal{P}(b)$. Then, there exists a solution $x\in\mathcal{P}(b)$ from Lemma \ref{lemma_lift}.
\end{proof}

In the next section, we find the upper-bound of the left-hand side of inequality \eqref{eqn_ADfb} by using the concave envelopes.
}

\subsection{Enclosure of Concave Envelope}
Consider over-estimator and under-estimator of $g(x,u)$, denoted by $\obar{g}(x,u)$ and $\ubar{g}(x,u)$:
\begin{equation}
    \ubar{g}_k(x,u) \leq g_k(x, u) \leq \obar{g}_k(x, u).
\end{equation}

While the above envelope gives the bound for all $x$, the inequality condition in Lemma \ref{lemma_ADfb} requires the bound over the set $\mathcal{P}$. Suppose the domain of $x$ is restricted to $\mathcal{P}$, and let us establish the bound over the self-mapping set. This bound is given by the following definition:
\begin{subequations}
\begin{align}\label{eq:max_f_over_P}
    \obar{g}_{\mathcal{P},k}(u,b) = \max_{x\in \mathcal{P}(b)} \obar{g}_k(x, u), \\
    \ubar{g}_{\mathcal{P},k}(u,b) = \min_{x\in \mathcal{P}(b)} \ubar{g}_k(x, u),
    \label{eq:min_f_over_P}
\end{align}
\label{eq:f_over_P}
\end{subequations}

Given this definition, they establish the bounds on $g_k(x,u)$ such that
\begin{equation}
    \ubar{g}_{\mathcal{P},k}(u,b) \leq g_k(x, u) \leq \obar{g}_{\mathcal{P},k}(u,b), \; \forall \, x\in\mathcal{P}(b).
\end{equation}
This forms a compact region that contains the nonlinearity in $\mathcal{P}$ as illustrated in Figure \ref{fig_envelope}. The self-mapping set could be interpreted as intervals of some transformed variables.

\begin{figure}[!htbp]
	\centering
	\includegraphics[width=2.6in]{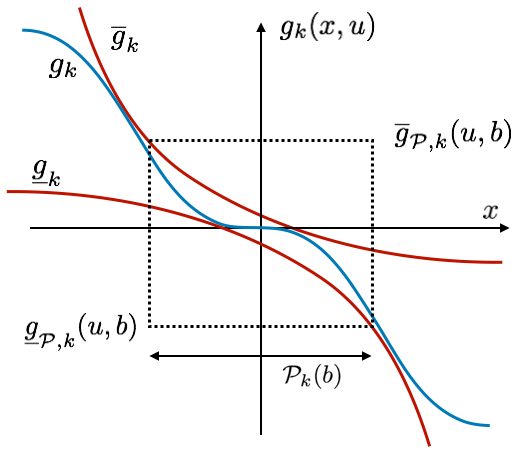}
	\caption{$\obar{g}_k(u,b)$ and $\ubar{g}_k(u,b)$ define maximum and minimum bounds of $g_k$ over $\mathcal{P}$. The dashed box contains all the nonlinearity over $\mathcal{P}$.  Notice that the upper bound always occur at the extreme points when the concave envelopes are used.}
	\label{fig_envelope}
\end{figure}

\subsection{Enforcing Convexity by Vertices Tracing}
This section introduces the vertices tracing, which is one of the key concepts that allow the scalable construction of convex restriction. Let us denote $\mathcal{P}_k$ be the polytope formed in the space of variables involved in $g_k$. Then the interval bound defined in Equation \eqref{eq:f_over_P} can be rewritten by the following Lemma.

\begin{lemma}
Suppose $\obar{g}_k(v, u)$ and $\ubar{g}_k(v, u)$ are convex and concave functions.
$\obar{g}_{\mathcal{P},k}(u,b)$ and $\ubar{g}_{\mathcal{P},k}(u,b)$ in \eqref{eq:f_over_P} are also convex and concave in $(u,b)$ and are given by
\begin{subequations}
\begin{align}
    \obar{g}_{\mathcal{P},k}(u,b) & = \max_{v\in \partial\mathcal{P}_k(b)}\obar{g}_k(v, u), \label{eqn_g_vertex1} \\
    \ubar{g}_{\mathcal{P},k}(u,b) & = \min_{v\in \partial\mathcal{P}_k(b)} \ubar{g}_k(v, u) \label{eqn_g_vertex2}
\end{align}
\label{eqn_g_vertex}
\end{subequations}
where $\partial\mathcal{P}_k(b)$ denotes the vertices of polytope $\mathcal{P}_k(b)$.
\label{lemma_gconvex}
\begin{proof}
Since $\obar{g}_k(v, u)$ is a convex function and $\mathcal{P}(b)$ is a convex function, its maximum always occur at the extreme points.
Moreover, Equation \eqref{eqn_g_vertex1} is a point-wise maximum over all vertices, therefore the convexity is preserved with respect to both $u$ and $b$ \cite{boyd04}. $\ubar{g}_k(v, u)$ can be proved in the same way.
 \end{proof}
\end{lemma}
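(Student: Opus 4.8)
The plan is to prove the two claims in Lemma~\ref{lemma_gconvex} separately for $\obar{g}_{\mathcal{P},k}$ and $\ubar{g}_{\mathcal{P},k}$; by the obvious symmetry (replace $g_k$ by $-g_k$, which swaps over- and under-estimators and swaps $\max$ with $\min$, and negates convex to concave), it suffices to treat the over-estimator case, and I would state this reduction explicitly at the start. So the target is: if $\obar{g}_k(\cdot,u)$ is convex in its state argument, then $\obar{g}_{\mathcal{P},k}(u,b)=\max_{x\in\mathcal{P}_k(b)}\obar{g}_k(x,u)$ equals the maximum over the \emph{vertices} $\partial\mathcal{P}_k(b)$, and moreover $\obar{g}_{\mathcal{P},k}$ is jointly convex in $(u,b)$.

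First I would establish the vertex reduction. Fix $(u,b)$. The set $\mathcal{P}_k(b)$ is a non-empty compact polytope (it is the projection, or the relevant coordinate slice, of $\mathcal{P}(b)=\{x\mid Ax\le b\}$ onto the variables entering $g_k$; compactness is assumed with the polytope). A convex function attains its maximum over a compact convex set at an extreme point: any $x\in\mathcal{P}_k(b)$ can be written as a convex combination $x=\sum_j \lambda_j v_j$ of the finitely many vertices $v_j\in\partial\mathcal{P}_k(b)$, and convexity (Jensen) gives $\obar{g}_k(x,u)\le\sum_j\lambda_j\obar{g}_k(v_j,u)\le\max_j\obar{g}_k(v_j,u)$. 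Since the reverse inequality is trivial (vertices lie in the polytope), \eqref{eqn_g_vertex1} follows. This is the standard argument and I would cite \cite{boyd04} for it, exactly as the author's proof sketch does.

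Next comes the joint convexity in $(u,b)$, which is the more delicate part and the main obstacle. The subtlety is that $\mathcal{P}_k(b)$ \emph{moves with} $b$, so the feasible set over which we maximize is $b$-dependent, and a naive "pointwise max of convex functions is convex" argument does not directly apply — that principle needs the index set to be fixed. The clean way around this is the vertex representation just derived, \emph{provided} one can track the vertices continuously: for the class of polytopes $\mathcal{P}_k(b)$ used here (the matrix $A$ is a fixed incidence/identity-type matrix so that $\mathcal{P}_k(b)$ is literally an axis-aligned box, or more generally a polytope with fixed combinatorial structure), each vertex is an \emph{affine} function $v_j=v_j(b)$ of $b$ on the relevant parameter range. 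Then $\obar{g}_k(v_j(b),u)$ is a convex function of $(u,b)$, being the composition of the convex function $\obar{g}_k$ with the affine map $(u,b)\mapsto(v_j(b),u)$; and $\obar{g}_{\mathcal{P},k}(u,b)=\max_j \obar{g}_k(v_j(b),u)$ is now a pointwise maximum over the \emph{fixed finite} index set of vertices, hence convex in $(u,b)$. I would make explicit the assumption that $A$ is chosen so this vertex-affineness holds (as the "vertices tracing" discussion around \eqref{eq_polytope} already indicates), since that is precisely what makes the lemma go through; the alternative, invoking general results on parametric linear programs, is heavier and I would avoid it. The concave/$\ubar{g}$ case then follows by the negation symmetry noted above, with $\min$ of concave functions concave.
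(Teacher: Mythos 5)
Your proof is correct and follows the same skeleton as the paper's (vertex reduction by Jensen, then convexity as a pointwise maximum over vertices), but it supplies a justification that the paper's one-line argument leaves implicit and that is genuinely needed: since $\mathcal{P}_k(b)$ moves with $b$, "pointwise max of convex functions" only applies after you know the maximum can be written over a \emph{fixed} finite index set with each vertex $v_j(b)$ \emph{affine} in $b$. This is not automatic for an arbitrary constant $A$: if the combinatorial structure of $\{x \mid Ax\le b\}$ changes as $b$ varies (vertices appearing or disappearing), the value $\max_{x\in\mathcal{P}(b)}\obar{g}_k(x,u)$ can fail to be convex in $b$ — e.g.\ maximizing $x_1^2+x_2^2$ over $\{0\le x_1\le 1,\ 0\le x_2\le 1,\ x_1+x_2\le b\}$ gives $b^2$ for $b\le 1$ and $1+(b-1)^2$ for $b\ge 1$, which is not convex at $b=1$. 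Your explicit restriction to the box-type polytopes actually used here (rows of $A$ of incidence/identity type, so each $\mathcal{P}_k(b)$ is an interval box whose vertices are affine in $b$ over the whole parameter range) is exactly what the paper's "vertices tracing" relies on, so your version is the same proof made airtight rather than a different one; the reduction of the $\ubar{g}$ case by the $g\mapsto -g$ symmetry matches the paper's "proved in the same way" remark.
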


\begin{remark}
Given Assumption \ref{assumption2}, the number of variables involved in $\psi_k$ is small. Then, the number of vertices of $\mathcal{P}_k$ is also small.
\end{remark}

Given the interval bound defined by Equation \eqref{eqn_g_vertex}, the convexity can be enforced to the self-mapping condition in Lemma \ref{lemma_ADfb}. First, positive and negative parts of matrix $K\in\mathbf{R}^{p\times q}$ are defined as $K^\pm\in\mathbf{R}^{p\times q}$ with
\begin{equation}
 K^+_{ij}=\begin{cases}K_{ij} \text{ if } K_{ij}>0 \\ 0 \text{ otherwise} \end{cases} \, K^-_{ij}=\begin{cases}K_{ij} \text{ if } K_{ij}<0 \\ 0 \text{ otherwise} \end{cases} \hskip -1em
    \label{eqn_mat_pm}
\end{equation}
where $K_{ij}$ refer to $i^{th}$ row and $j^{th}$ column of matrix $K$. So $K = K^+ + K^-$ and $\pm K^\pm_{ij} \geq 0$. The next lemma provides a convex upper-bound for left-hand side of Equation \eqref{eqn_ADfb} in Lemma \ref{lemma_ADfb}.

\begin{lemma}
{\color{black} For matrix $K\in\mathbf{R}^{p\times q}$, there exists a nonlinear map $w:(\mathbf{R}^{m},\mathbf{R}^{p})\rightarrow\mathbf{R}^{p}$ such that every entry $w_i(u,b)$ is a convex function with respect to $(u,b)$ and
\begin{equation}
 \max_{x\in\mathcal{P}(b)}K g(x, u) \leq w(u,b).
\end{equation}
$w(u,b)$ is given by
\begin{equation}
 w(u,b)= K^+ \obar{g}_\mathcal{P}(u,b) + K^- \ubar{g}_\mathcal{P}(u,b).
\end{equation}
}
\label{lemma_upper}
\end{lemma}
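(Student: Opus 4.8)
The plan is to establish the bound coordinatewise: fix a row index $i\in\{1,\dots,p\}$ and an arbitrary $x\in\mathcal{P}(b)$, and show that $(Kg(x,u))_i \le w_i(u,b)$. Writing the $i$-th row of $K$ as a sum of its positive and negative parts, $K_{i\cdot} = K^+_{i\cdot} + K^-_{i\cdot}$, we have $(Kg(x,u))_i = \sum_j K^+_{ij} g_j(x,u) + \sum_j K^-_{ij} g_j(x,u)$. The key observation is monotonicity: since $K^+_{ij}\ge 0$, replacing $g_j(x,u)$ by any upper bound only increases the term $K^+_{ij}g_j(x,u)$; since $K^-_{ij}\le 0$, replacing $g_j(x,u)$ by any lower bound only increases the term $K^-_{ij}g_j(x,u)$. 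Using the enclosure $\ubar{g}_{\mathcal{P},j}(u,b) \le g_j(x,u) \le \obar{g}_{\mathcal{P},j}(u,b)$ valid for all $x\in\mathcal{P}(b)$ (the inequality established right after \eqref{eq:f_over_P}), we get $(Kg(x,u))_i \le \sum_j K^+_{ij}\obar{g}_{\mathcal{P},j}(u,b) + \sum_j K^-_{ij}\ubar{g}_{\mathcal{P},j}(u,b) = w_i(u,b)$. Since this holds for every $x\in\mathcal{P}(b)$, taking the maximum over $x$ on the left gives $\max_{x\in\mathcal{P}(b)}(Kg(x,u))_i \le w_i(u,b)$, which is the desired inequality in vector form $\max_{x\in\mathcal{P}(b)}Kg(x,u)\le w(u,b)$.

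Next I would verify convexity of each $w_i(u,b)$ in $(u,b)$. By Lemma \ref{lemma_gconvex}, each component $\obar{g}_{\mathcal{P},j}(u,b)$ is convex in $(u,b)$ and each component $\ubar{g}_{\mathcal{P},j}(u,b)$ is concave in $(u,b)$ (this is where the hypothesis that $\obar{g}_k$ is convex and $\ubar{g}_k$ is concave — i.e., that concave envelopes are used — is essential). Now $w_i(u,b) = \sum_j K^+_{ij}\obar{g}_{\mathcal{P},j}(u,b) + \sum_j K^-_{ij}\ubar{g}_{\mathcal{P},j}(u,b)$ is a nonnegative combination of convex functions (the $K^+_{ij}\ge 0$ times convex $\obar{g}_{\mathcal{P},j}$) plus a sum of terms $K^-_{ij}\ubar{g}_{\mathcal{P},j}$ where $K^-_{ij}\le 0$ multiplies a concave function, which again yields a convex function; the sum of convex functions is convex, so $w_i(u,b)$ is convex in $(u,b)$.

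The argument is essentially a bookkeeping exercise in sign-tracking, so there is no single hard obstacle — the only subtlety worth being careful about is the interplay of signs: one must make sure that the positive part of $K$ is paired with the over-estimator and the negative part with the under-estimator, both for the inequality direction and for the convexity conclusion, and that this pairing is exactly what makes the two purposes compatible. It is precisely because concave envelopes are used (convex over-estimator, concave under-estimator) that $K^+\obar{g}_\mathcal{P}$ and $K^-\ubar{g}_\mathcal{P}$ are both convex; a different envelope choice would break convexity. This is the point the authors emphasize when they say concave envelopes are needed even for equality constraints, and it is worth stating explicitly in the proof. Everything else follows from the monotonicity of linear maps with signed coefficients and the composition rules for convex/concave functions (nonnegative scaling, sums), all of which are standard \cite{boyd04}.
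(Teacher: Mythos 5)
Your proof is correct and follows essentially the same route as the paper's: bound $Kg(x,u)$ over $\mathcal{P}(b)$ by pairing $K^+$ with the over-estimator $\obar{g}_\mathcal{P}$ and $K^-$ with the under-estimator $\ubar{g}_\mathcal{P}$, then invoke Lemma \ref{lemma_gconvex} and the nonnegativity of $K^+$ and $-K^-$ to conclude convexity of $w(u,b)$. You merely spell out the coordinatewise sign-tracking that the paper leaves implicit, which is a fair (and slightly more careful) rendering of the same argument.
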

\begin{proof}
Since $\obar{g}_\mathcal{P}(u,b)$ and $\ubar{g}_\mathcal{P}(u,b)$ are upper and lower bounds on $g(x,u)$,
\begin{equation*}
Kg(x, u)\leq K^+ \obar{g}_\mathcal{P}(u,b) + K^- \ubar{g}_\mathcal{P}(u,b)
\end{equation*}
for all $x\in\mathcal{P}(b)$.
Moreover, $\obar{g}_\mathcal{P}(u,b)$ and $-\ubar{g}_\mathcal{P}(u,b)$ are convex and concave functions from Lemma \ref{lemma_gconvex}, and $K^+$ and $-K^-$ have non-negative entries. Therefore, convexity is preserved to $w(u,b)$ \cite{boyd04}.
\end{proof}
Lemma \ref{lemma_upper} provides us with a convex over-estimator of the self-mapping condition, and it is important to note that the function is convex. Let us first consider only the equality constraint in Equation \eqref{eqn_feasibility1}, then the following Theorem provide the convex restriction of solvability sets.
\begin{theorem}
Given a nonlinear equality constraint in Equation \eqref{eqn_feasibility1}, $u$ is solvable if there exists $b\in\mathbf{R}^p$ such that 
\begin{equation}
    K^+ \obar{g}_\mathcal{P}(u,b) +K^- \ubar{g}_\mathcal{P}(u,b)\leq b. 
    \label{eqn_convres_solvability}
\end{equation}
Moreover, $x\in\mathcal{P}(b)$.
\label{thm_solvability}
\end{theorem}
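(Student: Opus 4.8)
The plan is to derive Theorem \ref{thm_solvability} as an almost immediate corollary of Lemma \ref{lemma_ADfb} combined with the convex upper bound of Lemma \ref{lemma_upper}. The chain of implications is already set up: Lemma \ref{lemma_ADfb} tells us that $u$ is solvable with a state solution $x\in\mathcal{P}(b)$ whenever there exists $b$ with $\max_{x\in\mathcal{P}(b)} Kg(x,u)\leq b$, and Lemma \ref{lemma_upper} tells us that $K^+\obar{g}_\mathcal{P}(u,b)+K^-\ubar{g}_\mathcal{P}(u,b)$ is a (componentwise) upper bound for that maximum. So the first step is to observe that the hypothesis \eqref{eqn_convres_solvability} directly implies the hypothesis \eqref{eqn_ADfb} of Lemma \ref{lemma_ADfb}.

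First I would recall from Lemma \ref{lemma_upper} that $w(u,b)=K^+\obar{g}_\mathcal{P}(u,b)+K^-\ubar{g}_\mathcal{P}(u,b)$ satisfies $\max_{x\in\mathcal{P}(b)}Kg(x,u)\leq w(u,b)$. Then, assuming \eqref{eqn_convres_solvability}, i.e. $w(u,b)\leq b$, I would chain the two inequalities to get $\max_{x\in\mathcal{P}(b)}Kg(x,u)\leq w(u,b)\leq b$, which is precisely condition \eqref{eqn_ADfb}. Invoking Lemma \ref{lemma_ADfb} then yields that $u$ is solvable and that there is a corresponding state solution with $x\in\mathcal{P}(b)$, which is exactly the conclusion, including the final assertion $x\in\mathcal{P}(b)$.

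The only genuinely substantive point — and the one I would flag as the ``main obstacle,'' though it is really a matter of interpretation rather than difficulty — is the role of convexity. The statement of the theorem describes the feasible $u$ as those admitting some $b$ satisfying \eqref{eqn_convres_solvability}, and the claim that this defines a \emph{convex restriction} (the set being convex in $(u,b)$, and its projection onto $u$ being convex) rests on Lemma \ref{lemma_upper}'s assertion that each component of $w(u,b)$ is convex in $(u,b)$: then $\{(u,b): w(u,b)\leq b\}$ is an intersection of convex sublevel sets, hence convex, and its projection onto the $u$-coordinates is convex. I would include a sentence noting this so the word ``restriction'' is justified, but the existence/solvability part of the proof needs nothing beyond the two cited lemmas. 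In short, the proof is a two-line composition of Lemma \ref{lemma_upper} and Lemma \ref{lemma_ADfb}, with a remark on convexity from Lemma \ref{lemma_upper} to explain why the resulting region is convex.
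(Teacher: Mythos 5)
Your proof is correct and follows exactly the paper's argument: chain the bound from Lemma \ref{lemma_upper} with the hypothesis \eqref{eqn_convres_solvability} to verify condition \eqref{eqn_ADfb}, then invoke Lemma \ref{lemma_ADfb} to conclude solvability with $x\in\mathcal{P}(b)$. Your additional remark on why the resulting region is convex is a reasonable supplement but not part of the paper's proof of this theorem.
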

\begin{proof}
From Lemma \ref{lemma_upper}, 
\begin{equation*}
\max_{x\in\mathcal{P}(b)} Kg(x,u)\leq K^+ \obar{g}_\mathcal{P}(u,b) +K^- \ubar{g}_\mathcal{P}(u,b)\leq b.
\end{equation*}
Thus $\max_{x\in\mathcal{P}(b)} Kg(x,u)\leq b$, and $u$ is solvable with $x\in\mathcal{P}(b)$ from Lemma \ref{lemma_ADfb}.
\end{proof}

In order to incorporate inequality constraints, let us define the bound on $\psi(x,u)$ using Lemma \ref{lemma_gconvex}:
\begin{subequations}
\begin{align}
    \obar{\psi}_{\mathcal{P},k}(u,b) & = \max_{v\in \partial\mathcal{P}_k(b)}\obar{\psi}_k(v, u) \\
    \ubar{\psi}_{\mathcal{P},k}(u,b) & = \min_{v\in \partial\mathcal{P}_k(b)} \ubar{\psi}_k(v, u).
\end{align}
\label{eqn_psi_vertex}
\end{subequations}
A convex sufficient condition for $h_k(x,u)\leq0$ for all $x\in\mathcal{P}_k(b)$ can be derived using Lemma \ref{lemma_upper}. This ensures the self-mapping set is contained in the feasible set for inequality constraint (i.e. $\mathcal{P}(b)\subset\mathcal{H}(u)$). The following Theorem provides the convex restriction of feasibility set, which is the main result in this paper.

\begin{theorem}
(\textbf{Convex Restriction}) 
Given nonlinear equality and inequality constraints in Equation \eqref{eqn_feasibility1} and \eqref{eqn_feasibility2}, $u$ is feasible if there exists $b\in\mathbf{R}^p$ such that
\begin{equation}
\begin{aligned}
K^+ \obar{g}_\mathcal{P}(u,b) +K^- \ubar{g}_\mathcal{P}(u,b)&\leq b \\
 \ L^+ \obar{\psi}_\mathcal{P}(u,b) +L^- \ubar{\psi}_\mathcal{P}(u,b)&\leq0.
\end{aligned}
\label{eqn_convres_feasibility}
\end{equation}
\label{thm_feasibility}
\end{theorem}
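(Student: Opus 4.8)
The plan is to assemble Theorem~\ref{thm_feasibility} directly from Lemma~\ref{lemma_lift} together with the two convex upper bounds supplied by Lemma~\ref{lemma_upper}. The key observation is that the two displayed inequalities in \eqref{eqn_convres_feasibility} are exactly sufficient conditions for the two containment conditions in \eqref{eqn_lift}: the first line certifies the self-mapping condition, and the second line certifies that $\mathcal{P}(b)$ lies inside the feasible set of the inequality constraint.

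For the first line, I would reuse the argument in Theorem~\ref{thm_solvability} verbatim. Lemma~\ref{lemma_upper} gives $\max_{x\in\mathcal{P}(b)}Kg(x,u)\le K^+\obar{g}_\mathcal{P}(u,b)+K^-\ubar{g}_\mathcal{P}(u,b)$, so whenever the right-hand side is $\le b$ we get $\max_{x\in\mathcal{P}(b)}Kg(x,u)\le b$, i.e.\ $-AJ_{f,0}^{-1}Mg(x,u)\le b$ for every $x\in\mathcal{P}(b)$; by the definition of $\mathcal{P}(b)$ in \eqref{eq_polytope} this is precisely $-J_{f,0}^{-1}Mg(x,u)\in\mathcal{P}(b)$ for all $x\in\mathcal{P}(b)$, the first condition of \eqref{eqn_lift}. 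For the second line, I would apply the same reasoning to $h$ instead of $f$: since $h=L\psi$, we have $h(x,u)=L\psi(x,u)$, and the bounds $\ubar{\psi}_{\mathcal{P}}(u,b)\le\psi(x,u)\le\obar{\psi}_{\mathcal{P}}(u,b)$ on $\mathcal{P}(b)$ give, splitting $L=L^++L^-$ and using the sign of the entries, $h(x,u)=L\psi(x,u)\le L^+\obar{\psi}_\mathcal{P}(u,b)+L^-\ubar{\psi}_\mathcal{P}(u,b)$ for all $x\in\mathcal{P}(b)$; hence if the second line of \eqref{eqn_convres_feasibility} holds, then $h(x,u)\le 0$ for all $x\in\mathcal{P}(b)$, which is the second condition of \eqref{eqn_lift}. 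With both conditions of \eqref{eqn_lift} in force, Lemma~\ref{lemma_lift} yields that $u$ is feasible (and that there is a state solution in $\mathcal{P}(b)$), completing the proof.

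I would also note, for completeness, why the stated conditions genuinely form a \emph{convex} restriction: by Lemma~\ref{lemma_gconvex} the functions $\obar{g}_\mathcal{P}$, $\obar{\psi}_\mathcal{P}$ are convex and $\ubar{g}_\mathcal{P}$, $\ubar{\psi}_\mathcal{P}$ are concave in $(u,b)$, and since $K^+,L^+$ have nonnegative entries while $-K^-,-L^-$ have nonnegative entries, each component of the left-hand sides is a nonnegative combination of convex functions, hence convex; the feasible set of $b$ is therefore convex, and its projection onto $u$ is a convex subset of the true feasibility set.

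The proof itself is essentially bookkeeping — the real work has already been done in Lemmas~\ref{lemma_fxpt}, \ref{lemma_lift}, \ref{lemma_gconvex}, and \ref{lemma_upper}. The only point requiring minor care is making the second inequality rigorous: Lemma~\ref{lemma_upper} is literally stated for the map $K$ and $g$, so to invoke it for $h=L\psi$ one should observe that the identical argument (splitting the matrix into positive and negative parts and substituting the interval bounds) applies with $L$ in place of $K$ and $\psi$ in place of $g$, using the bounds \eqref{eqn_psi_vertex}. I expect this substitution-and-bookkeeping step to be the main (and only mild) obstacle; everything else is a direct citation of the preceding lemmas.
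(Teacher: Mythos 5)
Your proposal is correct and follows essentially the same route as the paper: the first inequality is handled exactly as in Theorem~\ref{thm_solvability} (via Lemma~\ref{lemma_upper} and the self-mapping condition), and the second is handled by the same positive/negative splitting applied to $L$ and the bounds \eqref{eqn_psi_vertex} on $\psi$ over $\mathcal{P}(b)$, so that $h\le 0$ on $\mathcal{P}(b)$ and the solution guaranteed inside $\mathcal{P}(b)$ is feasible. Your explicit appeal to Lemma~\ref{lemma_lift} to combine the two containments is just a slightly more formal packaging of the paper's concluding sentence; no substantive difference.
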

\begin{proof}
Constraint $K^+ \obar{g}_\mathcal{P}(u,b) +K^- \ubar{g}_\mathcal{P}(u,b)\leq b$ ensures the existence of solution according to Theorem \ref{thm_solvability}. The second condition ensures that the polytope $\mathcal{P}(b)$ lies within the feasible region of inequality constraint. That is

\begin{equation*}
\max_{x\in\mathcal{P}(b)} L\psi(x,u)\leq L^+ \obar{\psi}_\mathcal{P}(u,b) +L^- \ubar{\psi}_\mathcal{P}(u,b)\leq0.
\end{equation*}

Therefore, this is a sufficient constraint for solvability of equation \eqref{eqn_feasibility1} and feasibility of equation \eqref{eqn_feasibility2}. 
\end{proof}

Note that the left-hand side of inequality \eqref{eqn_convres_feasibility} are convex functions as shown in Lemma \ref{lemma_upper}. Therefore Equation \eqref{eqn_convres_feasibility} provides convex conditions and is a sufficient convex condition for feasibility, which was the objective of the convex restriction. Moreover, the convex restriction guaranteed to be non-empty given a feasible base point stated in Assumption \ref{assumption1}.

\begin{remark}
If $\obar{g}_i(x_0,u_0)=\ubar{g}_i(x_0,u_0)$, and $\obar{\psi}_i(x_0,u_0)=\ubar{\psi}(x_0,u_0)$ (i.e. concave envelopes are tight and feasible at the base point), then the convex restriction in Equation \ref{eqn_convres_feasibility} is non-empty and contains the base point.
\end{remark}
\begin{proof}
Since $\mathcal{P}(b)=\{x\mid Ax\leq b\}$ is closed, there exists $\hat{b}$ such that $\mathcal{P}(\hat{b})=\{x_0\}$. Given the concave envelopes are tight at the base point and the base point is feasible (Assumption \ref{assumption1}), 
\begin{equation*}
\begin{aligned}
K^+ \obar{g}_\mathcal{P}(u_0,\hat{b}) +K^- \ubar{g}_\mathcal{P}(u_0,\hat{b})=Kg(x_0,u_0)&=\hat{b} \\
L^+ \obar{\psi}_\mathcal{P}(u_0,\hat{b}) +L^- \ubar{\psi}_\mathcal{P}(u_0,\hat{b})=L\psi(x_0,u_0)&\leq0.
\end{aligned}
\end{equation*}
The condition in Theorem \ref{thm_feasibility} is always satisfied at the base point, and thus the convex restriction contains the base point and is non-empty.
\end{proof}
From the above remark, a non-empty convex restriction can be always constructed around a feasible base point. The current or planned operating point can be naturally used as the base point for power flow feasibility set, which is given to the system operators through measurements. By changing the base point to different space within the feasible region, the convex restriction can be constructed at an arbitrary location.

\section{Convex Restriction of Power Flow Feasibility Set}
In this section, the convex restriction is constructed for the AC power flow equations in polar coordinates. The polar representation includes the voltage magnitude explicitly in the equation, and it is convenient to enforce the feasibility of voltage magnitude and phase limits. The AC power flow equations in equation \eqref{eqn_pf} can be written in the complex plane for all $i\in\mathcal{N}$:

\begin{equation}
p_i^\textrm{inj}+jq_i^\textrm{inj}=\sum_{k\in\mathcal{N}} Y_{ik}^Hv_iv_ke^{-j\theta_{ik}},
\end{equation}
where $Y_{ik}=G_{ik}+jB_{ik}$, and $Y_{ik}^H$ is the conjugate of $Y_{ik}$. Suppose the feasible base point has the state $\theta_0$ and $v_0$, then
\begin{equation}
p_i^\textrm{inj}+jq_i^\textrm{inj}=\sum_{k\in\mathcal{N}} \left(Y_{ik}^He^{-j\theta_{0,ik}}\right)v_iv_ke^{-j(\theta_{ik}-\theta_{0,ik})}, \ \ i\in\mathcal{N},
\end{equation}
where the base point phase is combined with the admittance matrix. Then, the phase-adjusted admittance matrix can be defined as $\widetilde{G}_{ik}+j\widetilde{B}_{ik}=Y_{ik}^He^{-j\theta_{0,ik}}$.
Let us define the difference in angle as $\varphi=E^T\theta$ and $\tilde{\varphi}=E^T\theta-E^T\theta_0$ where $E$ is the incidence matrix of the network. This can be expressed with trigonometric functions for $i\in\mathcal{N}_\textrm{ns}$ for active power and $i\in\mathcal{N}_\textrm{pv}$ for reactive power,
\begin{equation}
\begin{aligned}
p_i^\textrm{inj}&=\sum_{l\in\mathcal{E}} v_l^\textrm{f}v_l^\textrm{t}(\widehat{G}^c_{ik}\cos\tilde{\varphi}_l+\widehat{B}^s_{ik}\sin\tilde{\varphi}_l)+G_{ii}v_i^2 \\
q_i^\textrm{inj}&=\sum_{l\in\mathcal{E}} v_l^\textrm{f}v_l^\textrm{t}(\widehat{G}^c_{ik}\sin\tilde{\varphi}_l-\widehat{B}^s_{ik}\cos\tilde{\varphi}_l)-B_{ii}v_i^2,
\end{aligned}
\label{eqn_pf_eq}
\end{equation}
where $v^\textrm{f}\in\mathbf{R}^{|\mathcal{E}|}$ and $v^\textrm{t}\in\mathbf{R}^{|\mathcal{E}|}$ are voltage magnitudes at the \textit{from} and \textit{to} bus of transmission lines.  The constant matrices $\widehat{G}^c,\,\widehat{G}^s,\,\widehat{B}^c,\,\widehat{B}^s\in\mathbf{R}^{|\mathcal{N}|\times|\mathcal{E}|}$ are defined as
\begin{equation}
\widehat{G}^c_{kl}= \begin{cases} \widetilde{G}_{ik} &\mbox{if } i=l^\textrm{f} \\ \widetilde{G}_{ki} &\mbox{if } i=l^\textrm{t} \\ 0  &\mbox{otherwise} \\ \end{cases} \ 
\widehat{G}^s_{kl}= \begin{cases} \widetilde{G}_{ik} &\mbox{if } i=l^\textrm{f} \\ -\widetilde{G}_{ki} &\mbox{if } i=l^\textrm{t} \\ 0  &\mbox{otherwise} \\ \end{cases},
\end{equation}
where $l^\textrm{f}$ and $l^\textrm{t}$ are the $from$ and $to$ bus of transmission line $l$. Other matrices $\widehat{G}^s$, $\widehat{B}^c$ and $\widehat{B}^s$ are defined in the same way by simply replacing the variables.

The advantage of using Equation \eqref{eqn_pf_eq} over \eqref{eqn_pf} is that the concave envelope over trigonometric function can be systematically derived while ensuring zero gap between over and under-estimator at the base point. From the power flow equations, basis functions are chosen to be
\begin{equation}
\psi(x,u)=
\begin{bmatrix}
p^\textrm{inj} \\
q^\textrm{inj} \\
v^\textrm{f} v^\textrm{t}\cos\tilde{\varphi} \\
v^\textrm{f} v^\textrm{t}\sin\tilde{\varphi} \\
v^2
\end{bmatrix}.
\label{eq_pf_basis}
\end{equation}
With the given basis functions, the equality constraint is $f(x,u)=M\psi(x,u)=0$ where
\begin{equation}
M=
\begin{bmatrix}
I & \mathbf{0} & -\widehat{G}^c_\textrm{ns} & -\widehat{B}^s_\textrm{ns} & -G^d_\textrm{ns} \\
\mathbf{0} & I & \widehat{B}^c_\textrm{pq} & -\widehat{G}^s_\textrm{pq} & B^d_\textrm{pq}
\end{bmatrix}.
\end{equation}
$I$ and $\mathbf{0}$ are an identity matrix and a zero matrix with appropriate sizes. $G^d$ and $B^d$ are diagonal matrices with its diagonal elements equal to diagonals of $G$ and $B$, respectively. $\widehat{G}^{c}_\textrm{ns}$ denotes a matrix with only non-slack bus rows from $\widetilde{G}^c$, and $\widehat{G}^{c}_\textrm{pq}$ denotes a matrix only pq bus rows from $\widetilde{G}^c$. 
$\widehat{B}^{c}_\textrm{ns}$ and $\widehat{B}^{c}_\textrm{pq}$ are built in the same way.
Given the basis functions in \eqref{eq_pf_basis}, its residues computed using Equation \eqref{eqn_f},
\begin{equation}
g(x,u)=
\begin{bmatrix}
p^\textrm{inj}_\textrm{ns} \\
q^\textrm{inj}_\textrm{pq} \\
v^\textrm{f} v^\textrm{t}\cos\tilde{\varphi}-v^\textrm{f}_0v^\textrm{t}-v^\textrm{f}v^\textrm{t}_0 \\
v^\textrm{f} v^\textrm{t}\sin\tilde{\varphi}-v^\textrm{f}_0v^\textrm{t}_0\varphi \\
v^2-2v_0v
\end{bmatrix},
\label{eqn:g}
\end{equation}
where the omitted product is overloaded to element-wise product. For example, $v^\textrm{f}v^\textrm{t}\cos\varphi$ is element-wise product of $v^\textrm{f}$, $v^\textrm{t}$, and $\cos\varphi$. 
The self-mapping polytope is chosen as $\mathcal{P}=\{x\mid Ax\leq b\}$ where
\begin{equation}
A= \begin{bmatrix} E^T_\textrm{ns} & \mathbf{0} \\ \mathbf{0} & I \\ -E^T_\textrm{ns} & \mathbf{0} \\ \mathbf{0} & -I \\ \end{bmatrix} \ \text{and} \ \
b= \begin{bmatrix} \obar{\varphi} \\ \obar{v}_\textrm{pq} \\ -\ubar{\varphi}^\textrm{min} \\ -\ubar{v}_\textrm{pq} \end{bmatrix}
\end{equation}
and $E_\textrm{ns}$ is the incidence matrix with rows chosen for only non-slack buses.
By choosing $A$ as the above, $b$ has an interpretation of upper and lower bounds of $\varphi$ and $v_\textrm{pq}$.

The operational constraints on the voltage magnitude and phase angles can be written as $Ax\leq b^{max}$ where
\begin{equation}
b^\textrm{max}= \begin{bmatrix} {\varphi^\textrm{max}}^T & {v^\textrm{max}_\textrm{pq}}^T & {-\varphi^\textrm{min}}^T & {-v^\textrm{min}_\textrm{pq}}^T \end{bmatrix}^T.
\end{equation}

The reactive power limit constraint on PV buses can be written as $L\psi(x,u)\leq d$ where
\begin{equation}
L=\begin{bmatrix}
\mathbf{0} & \mathbf{0} & -\widehat{B}^c_\textrm{pv} & \widehat{G}^s_\textrm{pv} & -\widehat{B}^d_\textrm{pv} \\
\mathbf{0} & \mathbf{0} & \widehat{B}^c_\textrm{pv} & -\widehat{G}^s_\textrm{pv} & \widehat{B}^d_\textrm{pv}
\end{bmatrix}, \ \
d= \begin{bmatrix} q^\textrm{max}_\textrm{pq} \\ -q^\textrm{min}_\textrm{pq} \end{bmatrix}.
\end{equation}
The inequality constrained set is then $\mathcal{H}(u)=\{x\mid Ax\leq b^\textrm{max},\ L\psi(x,u)\leq d\}$. The self-mapping set belongs to the inequality constrained set ($\mathcal{P}\subseteq\mathcal{H}(u)$) if $b\leq b^{max}$ and $L^+\obar{\psi}(u,b)+L^-\ubar{\psi}(u,b)\leq d$. The trigonometric terms and its product with voltage magnitudes are bounded effectively by the phase angle differences and voltage magnitudes. In the next section, quadratic concave envelopes will be derived for bilinear and trigonometric functions, and the convex restriction will be constructed with convex quadratic constraints.

\subsection{Quadratic concave envelopes}
The main nonlinearities involved in the power flow equations in polar coordinates are the quadratic, trilinear and trigonometric functions. Following corollaries provide concave envelopes for commonly used functions that can be used as building blocks for bounding more complicated functions.

\begin{corollary}
Quadratic functions can be bounded by the following concave envelopes with the base point at $x_0$:
\begin{equation}
\begin{aligned}
x^2&\geq 2x-x_0^2 \\
x^2&\leq x^2.
\end{aligned}
\end{equation}
\label{cor_quad_env}
\end{corollary}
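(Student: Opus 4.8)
The plan is to treat the two bounds separately, since they play quite different roles. The over-estimator statement $x^{2}\le x^{2}$ is a tautology; what I would actually argue is why it is the right choice: $x^{2}$ is convex, hence it serves as its own valid over-estimator in the form needed by Lemma~\ref{lemma_gconvex}, and it is exact at every point, in particular tight at the base point $x_{0}$. It is worth noting that one cannot replace it by a genuinely nontrivial concave over-estimator that stays tight at $x_{0}$: a concave function lying weakly above $x^{2}$ and meeting it at $x_{0}$ would lie weakly below its supporting line at $x_{0}$, which is exactly the tangent line of $x^{2}$ there, and that line drops below $x^{2}$ away from $x_{0}$ --- a contradiction. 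So $x^{2}$ itself is the natural and essentially forced over-estimator.

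For the under-estimator, I would take the first-order Taylor expansion (the tangent line) of $x^{2}$ at the base point, $\ell(x)=x_{0}^{2}+2x_{0}(x-x_{0})=2x_{0}x-x_{0}^{2}$, and verify $\ell(x)\le x^{2}$. This is immediate by completing the square: $x^{2}-\ell(x)=x^{2}-2x_{0}x+x_{0}^{2}=(x-x_{0})^{2}\ge 0$, with equality exactly at $x=x_{0}$. Thus $\ell$ is a global under-estimator; it is affine, hence concave, which is what Lemma~\ref{lemma_gconvex} asks of an under-estimator; and it agrees with $x^{2}$ at $x_{0}$, which gives the zero-gap property $\ubar{g}_{i}(x_{0},u_{0})=\obar{g}_{i}(x_{0},u_{0})$ used in the non-emptiness argument for the convex restriction --- this is also precisely why the residue of the $v^{2}$ basis function in~\eqref{eqn:g} takes the form $v^{2}-2v_{0}v$.

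I expect no real obstacle: the entire corollary rests on the single identity $(x-x_{0})^{2}\ge 0$. The only points needing care are pinning down the base-point value so the under-estimator is tight at $x_{0}$ rather than elsewhere, and checking the convexity/concavity side conditions so that Corollary~\ref{cor_quad_env} can be substituted directly into Lemma~\ref{lemma_gconvex}, Lemma~\ref{lemma_upper}, and Theorems~\ref{thm_solvability} and~\ref{thm_feasibility} without further work. These quadratic envelopes then serve as the base case from which the trilinear and trigonometric envelopes used in the power-flow construction are assembled.
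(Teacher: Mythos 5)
Your proof is correct and is precisely the argument the paper leaves implicit (the corollary is stated without proof): the over-estimator is the tautological convex bound $x^2\le x^2$, and the under-estimator is the tangent line at the base point, verified by $x^2-\bigl(2x_0x-x_0^2\bigr)=(x-x_0)^2\ge 0$, which is affine hence concave and tight at $x_0$, exactly the properties needed to feed Lemma~\ref{lemma_gconvex} and the zero-gap (non-emptiness) remark. Note that you have, correctly, read the printed bound $2x-x_0^2$ as the tangent line $2x_0x-x_0^2$ --- the printed form fails, e.g., at $x_0=0$, $x=1$ --- and your reading is the one the paper itself uses in the residue $v^2-2v_0v$ of~\eqref{eqn:g} and in the appendix bound $\ubar{\psi}_{\mathcal{P},k}^Q\le 2v_0v_k-v_0^2$.
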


\begin{corollary}
Bilinear functions can be bounded by the following concave envelopes with some $\rho_1, \, \rho_2>0$ and the base point $x_0,\, y_0$:
\begin{equation}
\begin{aligned}
xy&\geq-\frac{1}{4}[\rho_1(x-x_0)-\frac{1}{\rho_1}(y-y_0)]^2 \\
& \hskip 8em +x_0y+xy_0-x_0y_0 \\
xy&\leq\frac{1}{4}[\rho_2(x-x_0)+\frac{1}{\rho_2}(y-y_0)]^2 \\
& \hskip 8em +x_0y+xy_0-x_0y_0.
\end{aligned}
\end{equation}
\label{cor_bilinear_env}
\end{corollary}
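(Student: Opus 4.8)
The plan is to reduce both inequalities to a single algebraic identity by a change of variables that centers the bilinear form at the base point, after which the residuals are perfect squares. First I would set $a = x-x_0$ and $b = y-y_0$ and expand $xy = (x_0+a)(y_0+b)$ to obtain the exact decomposition $xy = x_0 y + x y_0 - x_0 y_0 + ab$. This isolates the genuinely nonlinear part, so the claimed lower and upper envelopes become equivalent, respectively, to $ab \ge -\tfrac14\bigl(\rho_1 a - \tfrac1{\rho_1} b\bigr)^2$ and $ab \le \tfrac14\bigl(\rho_2 a + \tfrac1{\rho_2} b\bigr)^2$, to be established for all $a,b$ and any fixed $\rho_1,\rho_2>0$.

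Both reduce to nonnegativity of a square. For the upper bound I would compute $\tfrac14\bigl(\rho_2 a + \tfrac1{\rho_2}b\bigr)^2 - ab = \tfrac14\rho_2^2 a^2 - \tfrac12 ab + \tfrac1{4\rho_2^2}b^2 = \tfrac14\bigl(\rho_2 a - \tfrac1{\rho_2}b\bigr)^2 \ge 0$; symmetrically, $ab + \tfrac14\bigl(\rho_1 a - \tfrac1{\rho_1}b\bigr)^2 = \tfrac14\bigl(\rho_1 a + \tfrac1{\rho_1}b\bigr)^2 \ge 0$. Undoing the substitution $a\mapsto x-x_0$, $b\mapsto y-y_0$ then yields the stated bounds. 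I would also record that these qualify as concave envelopes in the sense used earlier: the over-estimator is a positive multiple of a squared affine function of $(x,y)$ plus an affine term, hence convex, while the under-estimator is the negative of such a square plus an affine term, hence concave; and at $(x,y)=(x_0,y_0)$ the square vanishes, so both bounds equal $x_0 y_0 = xy$, giving the zero gap at the base point required by the non-emptiness argument for the restriction.

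There is essentially no obstacle here, since this is a completing-the-square exercise. The only care needed is in the bookkeeping of the cross terms so that the residuals come out as genuine squares for arbitrary positive $\rho_1,\rho_2$, and in observing that $\rho_1,\rho_2$ remain free parameters that can later be chosen to minimize the envelope gap over a prescribed interval of $x$ and $y$ (analogous to the role they play in the companion bound for trilinear and trigonometric terms).
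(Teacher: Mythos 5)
Your completing-the-square argument is correct: after the exact decomposition $xy = x_0y + xy_0 - x_0y_0 + (x-x_0)(y-y_0)$, the residuals $\tfrac14\bigl(\rho_2(x-x_0)-\tfrac1{\rho_2}(y-y_0)\bigr)^2$ and $\tfrac14\bigl(\rho_1(x-x_0)+\tfrac1{\rho_1}(y-y_0)\bigr)^2$ are nonnegative squares, which is precisely the (unstated) computation behind the paper's corollary and matches its remark on where each estimator is tight and on the zero gap at the base point. Your added observations on convexity of the over-estimator and concavity of the under-estimator are exactly the properties the paper's Lemma 5 machinery needs, so the proposal is correct and takes essentially the paper's approach.
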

The over-estimator is tight along $\rho_2(x-x_0)-\frac{1}{\rho_2}(y-y_0)=0$, and the under-estimator is tight along $\rho_2(x-x_0)+\frac{1}{\rho_2}(y-y_0)=0$. Both over and under-estimators are tight at the base point, $(x_0,y_0)$.

\begin{corollary}
Trigonometric functions can be bounded by the following quadratic concave envelopes for all $\theta^\textrm{max} \in \left[0,\,\pi\right]$ and $\theta^\textrm{min} \in \left[-\pi,\,0\right]$:
\begin{equation}
\begin{aligned}
\sin\theta&\geq\theta+\left(\frac{\sin\theta^\textrm{max}-\theta^\textrm{max}}{(\theta^\textrm{max})^2}\right)\theta^2, \ \theta<\theta^\textrm{max} \\
\sin\theta&\leq \theta+\left(\frac{\sin\theta^\textrm{min}-\theta^\textrm{min}}{(\theta^\textrm{min})^2}\right)\theta^2, \ \theta>\theta^\textrm{min},
\end{aligned}
\end{equation}
and for all $\theta$:
\begin{equation}
\begin{aligned}
\cos\theta&\geq1-\frac{1}{2}\theta^2 \\
\cos\theta&\leq1.
\end{aligned}
\end{equation}
\label{cor_trig_env}
\end{corollary}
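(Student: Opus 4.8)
The plan is to prove the four inequalities one at a time, obtaining the two cosine bounds directly and then reducing the sine upper bound to the sine lower bound via the oddness of $\sin$.

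The cosine bounds are the routine part. The bound $\cos\theta \leq 1$ is immediate. For $\cos\theta \geq 1 - \tfrac{1}{2}\theta^2$, set $f(\theta) = \cos\theta - 1 + \tfrac{1}{2}\theta^2$; then $f(0) = 0$ and $f'(\theta) = \theta - \sin\theta \geq 0$ for $\theta \geq 0$ by the elementary inequality $\sin\theta \leq \theta$, so $f$ is nondecreasing on $[0,\infty)$ and hence nonnegative there, while $f \geq 0$ for $\theta \leq 0$ follows because $f$ is even.

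The core of the argument is the sine lower bound $\sin\theta \geq \theta + c\theta^2$ with $c = (\sin\theta^\textrm{max} - \theta^\textrm{max})/(\theta^\textrm{max})^2$ for $\theta \leq \theta^\textrm{max}$ and $\theta^\textrm{max} \in (0,\pi]$. First note $c \leq 0$ since $\sin\theta^\textrm{max} \leq \theta^\textrm{max}$. For $\theta \leq 0$ the inequality is trivial: $\sin\theta - \theta \geq 0$ (the sine graph lies above the identity for nonpositive arguments) while $c\theta^2 \leq 0$. For $0 < \theta \leq \theta^\textrm{max}$, dividing by $\theta^2 > 0$ reduces the claim to showing that $p(\theta) := (\theta - \sin\theta)/\theta^2$ is nondecreasing on $(0,\pi]$, since then $p(\theta) \leq p(\theta^\textrm{max}) = -c$. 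I would differentiate to get $p'(\theta) = \theta^{-3} M(\theta)$ with $M(\theta) = 2\sin\theta - \theta(1 + \cos\theta)$, then apply the half-angle identities $\sin\theta = 2\sin(\theta/2)\cos(\theta/2)$ and $1 + \cos\theta = 2\cos^2(\theta/2)$ to factor $M(\theta) = 4\cos^2(\theta/2)\,(\tan(\theta/2) - \theta/2)$ for $\theta \in (0,\pi)$, with $M(\pi) = 0$. Since $\theta/2 \in (0,\pi/2)$ gives $\cos(\theta/2) > 0$ and $\tan(\theta/2) \geq \theta/2$ (itself from $(\tan\phi - \phi)' = \tan^2\phi \geq 0$), we obtain $M \geq 0$ on $(0,\pi]$, hence $p' \geq 0$ and the monotonicity.

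The sine upper bound then follows by substituting $\theta \mapsto -\theta$ and $\theta^\textrm{min} \mapsto -\theta^\textrm{min} \in (0,\pi]$: oddness of $\sin$ turns the lower bound just proved into $\sin\theta \leq \theta + c'\theta^2$ with exactly $c' = (\sin\theta^\textrm{min} - \theta^\textrm{min})/(\theta^\textrm{min})^2$, and the range $\theta < \theta^\textrm{max}$ becomes $\theta > \theta^\textrm{min}$. I expect the only genuine obstacle to be the monotonicity of $p$ on $(0,\pi]$; the double application of the half-angle identity together with $\tan\phi \geq \phi$ is what makes that step clean, so that is where I would concentrate the effort.
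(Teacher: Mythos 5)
Your proof is correct. Note that the paper states Corollary \ref{cor_trig_env} without any proof, so there is no in-paper argument to compare against; your write-up in effect supplies the missing justification. The route you take is the natural one and every step checks out: the cosine bounds via $f(\theta)=\cos\theta-1+\tfrac12\theta^2$, $f(0)=0$, $f'(\theta)=\theta-\sin\theta$ and evenness; the sine lower bound by reducing, for $0<\theta\leq\theta^{\textrm{max}}$, to monotonicity of $p(\theta)=(\theta-\sin\theta)/\theta^2$ on $(0,\pi]$, where indeed $p'(\theta)=\theta^{-3}M(\theta)$ with $M(\theta)=2\sin\theta-\theta(1+\cos\theta)=4\cos^2(\theta/2)\bigl(\tan(\theta/2)-\theta/2\bigr)\geq 0$ (and $M(\pi)=0$), together with the trivial case $\theta\leq 0$ since $c\leq 0$ and $\sin\theta\geq\theta$ there; and the sine upper bound by oddness, $\theta\mapsto-\theta$, $\theta^{\textrm{min}}\mapsto-\theta^{\textrm{min}}$, which reproduces exactly the stated coefficient. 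The only caveat is a wrinkle in the statement itself rather than in your argument: the corollary's ranges nominally allow $\theta^{\textrm{max}}=0$ and $\theta^{\textrm{min}}=0$, where the quadratic coefficients are of the form $0/0$; your implicit restriction to $\theta^{\textrm{max}}\in(0,\pi]$ and $\theta^{\textrm{min}}\in[-\pi,0)$ is the correct reading (the limiting inequalities degenerate to $\sin\theta\geq\theta$ for $\theta\leq 0$ and $\sin\theta\leq\theta$ for $\theta\geq 0$, which also hold), and it matches how the envelopes are actually deployed in the appendix over nondegenerate angle intervals.
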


\begin{figure}[!htbp]
	\centering
	\includegraphics[width=3.4in]{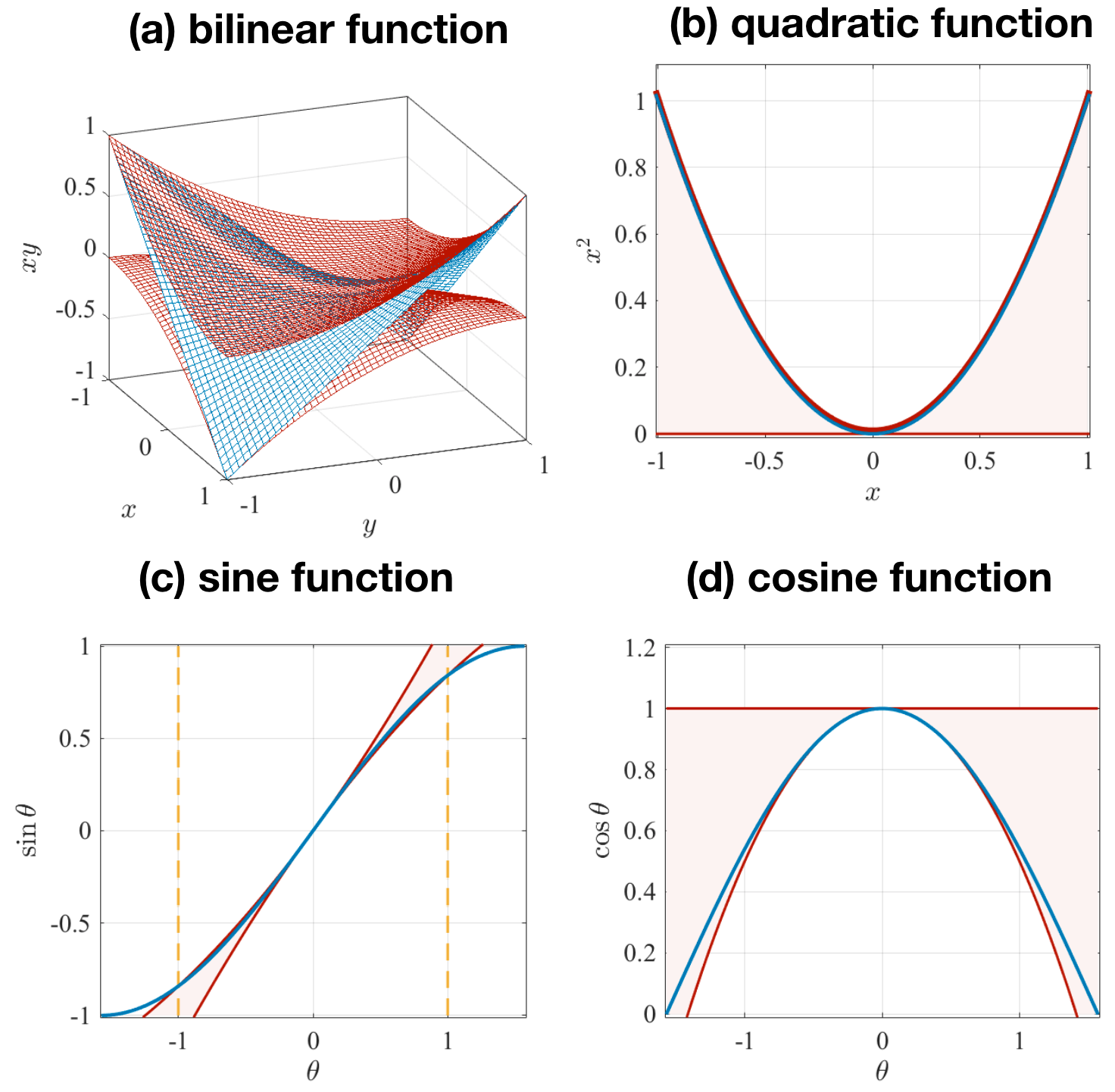}
	\caption{Illustration of concave envelopes in Corollary \ref{cor_bilinear_env} and \ref{cor_trig_env}. In (c), $\theta^{max}$ and $\theta^{min}$ is drawn with yellow dashed line.}
	\label{fig_convenv_ex}
\end{figure}

Envelopes for quadratic, bilinear and trigonometric functions are illustrated in Figure \ref{fig_convenv_ex}. More complicated functions such as trilinear functions can be bounded by cascading bilinear concave envelope. For example, $v_l^\textrm{f}v_l^\textrm{t}\cos\tilde{\varphi}_l$ can bounded by defining an intermediate variable $vv_l=v_l^\textrm{f}v_l^\textrm{t}$, and the bilinear envelope can be applied to $vv_l$ and $vv_l\cos\tilde{\varphi}_l$. In the following Lemma, we finally state the analytical expression of the convex restriction of the power flow feasibility set.
{\color{black}
\begin{corollary}
(\textbf{QC Restriction of Power Flow Equations with Operational Constraints})
The control variable $u=p_\textrm{ns}$ has at least one internal state solution, $x=\begin{bmatrix} \theta_\textrm{ns}^T & v_\textrm{pq}^T \end{bmatrix}^T$ satisfying power flow equations in \eqref{eqn_pf} and operational constraints in \eqref{eqn_pf_feasibility} if there exists $b\in\mathbf{R}^p$ such that
\begin{equation}
\begin{aligned}
K^+\obar{g}_\mathcal{P}+K^-\ubar{g}_\mathcal{P}&\leq b \\
L^+\obar{\psi}_\mathcal{P}+L^-\ubar{\psi}_\mathcal{P}&\leq d, \ b\leq b^{max} \\
\end{aligned}
\end{equation}
where
\begin{equation}
\begin{aligned}
b&=\begin{bmatrix} \obar{\varphi}^T & \obar{v}^T_\textrm{pq} & -\ubar{\varphi}^T & -\ubar{v}^T_\textrm{pq} \end{bmatrix}^T \\
\obar{g}_\mathcal{P}&=\begin{bmatrix}  p_\textrm{ns}^T & q_\textrm{pq}^T & {\obar{g}^C_\mathcal{P}}^T & {\obar{g}^S_\mathcal{P}}^T & {\obar{g}^Q_\mathcal{P}}^T  \end{bmatrix}^T \\
 \ubar{g}_\mathcal{P}&=\begin{bmatrix}  p_\textrm{ns}^T & q_\textrm{pq}^T & {\ubar{g}^C_\mathcal{P}}^T & {\ubar{g}^S_\mathcal{P}}^T & {\ubar{g}^Q_\mathcal{P}}^T  \end{bmatrix}^T \\
\obar{\psi}_\mathcal{P}&=\begin{bmatrix}  p_\textrm{ns}^T & q_\textrm{pq}^T & {\obar{\psi}^C_\mathcal{P}}^T & {\obar{\psi}^S_\mathcal{P}}^T & {\obar{\psi}^Q_\mathcal{P}}^T  \end{bmatrix}^T \\
 \ubar{\psi}_\mathcal{P}&=\begin{bmatrix}  p_\textrm{ns}^T & q_\textrm{pq}^T & {\ubar{\psi}^C_\mathcal{P}}^T & {\ubar{\psi}^S_\mathcal{P}}^T & {\ubar{\psi}^Q_\mathcal{P}}^T  \end{bmatrix}^T.
\end{aligned}
\end{equation}
These are convex quadratic constraints that provide a convex restriction of power flow feasibility set.
\label{cor_pf_convrestr}
\end{corollary}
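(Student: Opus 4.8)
The plan is to obtain this corollary as a direct specialization of Theorem~\ref{thm_feasibility} to the power flow data assembled in this section. First I would check that Assumptions~\ref{assumption1} and~\ref{assumption2} are in force: the base point is the current operating point, which satisfies the operational constraints and sits away from the solvability boundary, so $J_{f,0}=MJ_{\psi,0}$ is nonsingular; and the basis $\psi$ in \eqref{eq_pf_basis} has the required sparse nonlinear representation, since each trilinear entry $v_l^\textrm{f}v_l^\textrm{t}\cos\tilde{\varphi}_l$, $v_l^\textrm{f}v_l^\textrm{t}\sin\tilde{\varphi}_l$ depends only on the two terminal voltages and the single angle difference of line $l$, and each entry of $v^2$ on a single bus voltage. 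With $M$, $L$, $A$, $b^\textrm{max}$, $d$ exactly as defined above, the power flow equations \eqref{eqn_pf}--\eqref{eqn_pf_eq} read $f(x,u)=M\psi(x,u)=0$ and the operational constraints \eqref{eqn_pf_feasibility} read $Ax\leq b^\textrm{max}$ together with $L\psi(x,u)\leq d$. Since $\mathcal{P}(b)=\{x\mid Ax\leq b\}$, containment $\mathcal{P}(b)\subseteq\{x\mid Ax\leq b^\textrm{max}\}$ is equivalent to the linear condition $b\leq b^\textrm{max}$, which is why the box-type limits appear separately from the $L\psi$ limits in the statement.

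Next I would construct the explicit envelopes $\obar{g}_\mathcal{P},\ubar{g}_\mathcal{P}$ and $\obar{\psi}_\mathcal{P},\ubar{\psi}_\mathcal{P}$ entering the bound. The residue vector $g$ in \eqref{eqn:g} has three kinds of nonlinear entries: for $v^2-2v_0v$ I invoke Corollary~\ref{cor_quad_env}, and for the trigonometric/trilinear entries I cascade the bilinear envelope of Corollary~\ref{cor_bilinear_env} (introducing the intermediate variable $vv_l=v_l^\textrm{f}v_l^\textrm{t}$ and then bounding $vv_l\cos\tilde{\varphi}_l$ and $vv_l\sin\tilde{\varphi}_l$) together with the trigonometric envelope of Corollary~\ref{cor_trig_env}, so that every over-estimator is convex quadratic and every under-estimator concave quadratic. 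Then by Lemma~\ref{lemma_gconvex} the pointwise maximization/minimization over the vertices $\partial\mathcal{P}_k(b)$ of the low-dimensional box $\mathcal{P}_k(b)$, whose corner coordinates are read directly from the relevant entries of $b$, yields closed-form functions $\obar{g}_\mathcal{P}(u,b),\ubar{g}_\mathcal{P}(u,b)$ that are respectively convex and concave in $(u,b)$, and similarly $\obar{\psi}_\mathcal{P},\ubar{\psi}_\mathcal{P}$; the entries associated with $p^\textrm{inj}_\textrm{ns}$ and $q^\textrm{inj}_\textrm{pq}$ are linear and pass through unchanged, which matches the block structure displayed in the statement.

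With these ingredients the conclusion follows from Lemma~\ref{lemma_upper} and Theorems~\ref{thm_solvability}--\ref{thm_feasibility}. The first inequality $K^+\obar{g}_\mathcal{P}+K^-\ubar{g}_\mathcal{P}\leq b$, with $K=-AJ_{f,0}^{-1}M$, upper-bounds $\max_{x\in\mathcal{P}(b)}Kg(x,u)$ by a convex function of $(u,b)$ and hence certifies the Brouwer self-mapping of $\mathcal{P}(b)$, giving a state solution with $x\in\mathcal{P}(b)$; the conditions $b\leq b^\textrm{max}$ and $L^+\obar{\psi}_\mathcal{P}+L^-\ubar{\psi}_\mathcal{P}\leq d$ force $\mathcal{P}(b)\subseteq\mathcal{H}(u)$, so that solution also respects the voltage, phase, and reactive-power limits. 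Convexity of the feasible set in $(u,b)$ is inherited from Lemma~\ref{lemma_upper}, and since every envelope used is quadratic, all resulting constraints are convex quadratic, as claimed.

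The main obstacle will be the cascaded-envelope step: one must verify that composing the bilinear bound of Corollary~\ref{cor_bilinear_env} with the trigonometric bound of Corollary~\ref{cor_trig_env} through the auxiliary variable $vv_l$ genuinely produces a valid pair of concave envelopes for the trilinear terms — the composite over-estimator must stay above the true function while remaining convex, and dually for the under-estimator — and that the validity conditions $\tilde{\varphi}_l<\theta^\textrm{max}$, $\tilde{\varphi}_l>\theta^\textrm{min}$ of the trigonometric envelope are ensured by the polytope bounds, which is precisely where $b\leq b^\textrm{max}$ and the definition of $b^\textrm{max}$ in terms of $\varphi^\textrm{max},\varphi^\textrm{min}$ are used. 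The remaining work — tracking the non-slack, PV, and PQ row selections in $M$ and $L$ and enumerating the box vertices — is routine bookkeeping.
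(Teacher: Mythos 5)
Your proposal is correct and follows essentially the same route as the paper: specialize Theorem~\ref{thm_feasibility} to the power-flow basis \eqref{eq_pf_basis} with the given $M$, $L$, $A$, $b^{\textrm{max}}$, $d$, handle the box-type voltage/angle limits via $b\leq b^{\textrm{max}}$ and the reactive-power limits via $L^+\obar{\psi}_\mathcal{P}+L^-\ubar{\psi}_\mathcal{P}\leq d$, and build $\obar{g}_\mathcal{P},\ubar{g}_\mathcal{P},\obar{\psi}_\mathcal{P},\ubar{\psi}_\mathcal{P}$ from the quadratic, bilinear, and trigonometric envelopes of Corollaries~\ref{cor_quad_env}--\ref{cor_trig_env} combined through Lemma~\ref{lemma_gconvex} (vertex tracing) and Lemma~\ref{lemma_upper}, exactly as the paper does with the explicit cascaded bounds in the Appendix. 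The "obstacle" you flag — validity of the cascaded trilinear envelopes and of the sine-envelope domain conditions — is resolved in the paper by the auxiliary constraints $\ubar{\varphi}_l\leq\ubar{\varphi}_l^{\textrm{max}}$, $\obar{\varphi}_l\geq\obar{\varphi}_l^{\textrm{min}}$ listed with the Appendix interval bounds, so no additional idea is needed.
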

$\ubar{g}^C_{\mathcal{P},l}$, $\ubar{g}^S_{\mathcal{P},l}$ and $\ubar{g}^Q_{\mathcal{P},l}$ denote the variables representing interval bounds of nonlinear elements in Equation \eqref{eqn:g}. Their explicit bounds are provided in the Appendix.

\begin{remark}
The number of constraints grows linearly with respect to the number of buses and number of lines. The number of constraints involved in Corollary \ref{cor_pf_convrestr} is less than $a|\mathcal{N}|+b|\mathcal{E}|$ where $|\mathcal{N}|$ and $|\mathcal{E}|$ are number of buses and transmission lines, and $a$ and $b$ are constants independent of the system size.
\end{remark}

\begin{figure*}[!htbp]
	\centering
	\includegraphics[width=7in]{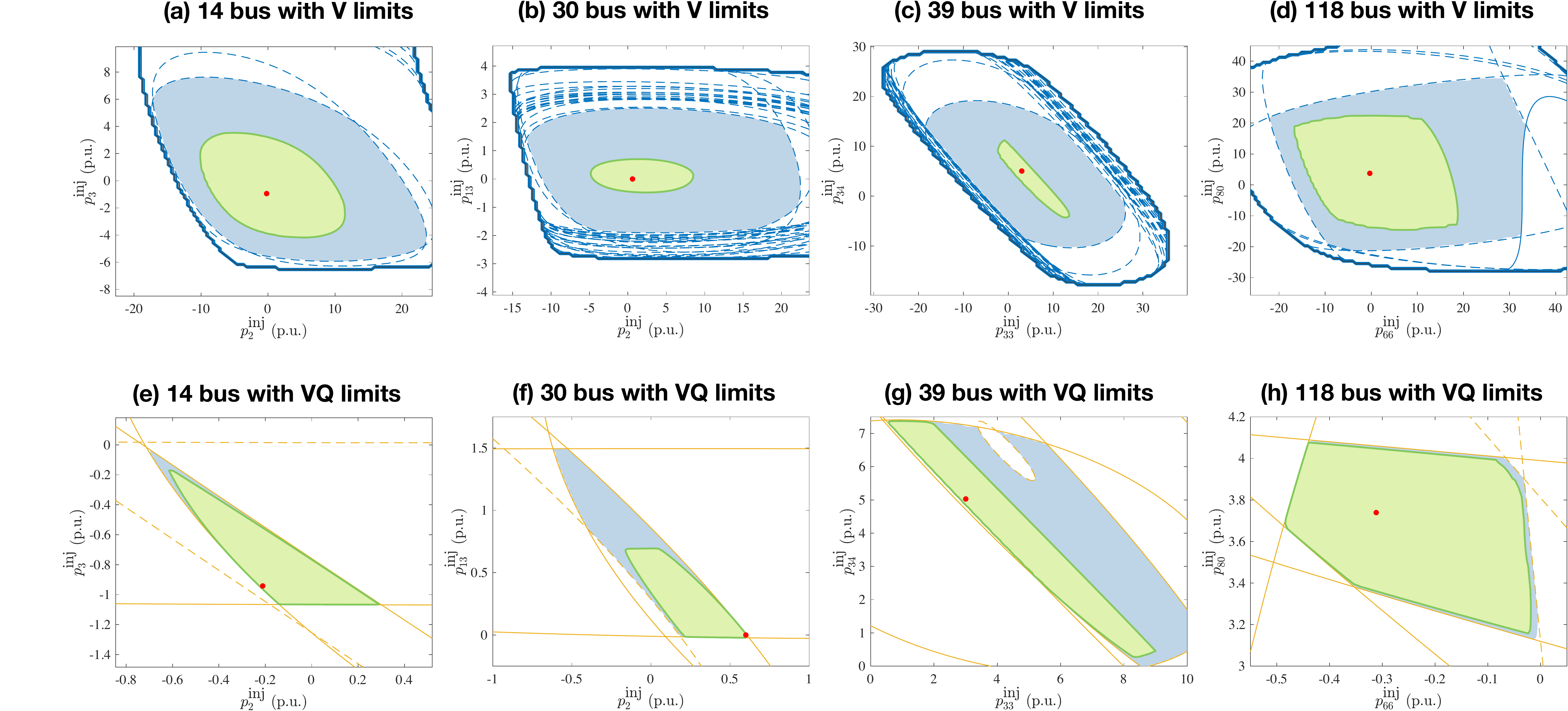}
	\caption{Convex restrictions of feasible active power injection set in 14, 30, 39 and 118 bus system are shown. Figure (a) to (d) only considers the voltage magnitude limits, and Figure (e) to (h) considers both voltage magnitude and reactive power limits. Thick blue lines show the solvability boundary. Solid yellow lines show reactive power upper limit and dashed yellow lines show reactive power lower limits.}
	\label{fig_9bus}
\end{figure*}

\subsection{Visualization of Convex Restrictions}
This section provides visualization of the convex restriction in 2-dimensional space where the constraints were implemented in JuMP/Julia \cite{jump}. The plots were drawn by varying two control variables and fixing all the others control variables. This creates a cross section of the feasibility set that cuts through the base point. The actual feasible set was solved using the Newton-Raphson method in MATPOWER package, and the same data set was used for convex restriction \cite{matpower}.

\begin{figure}[!htbp]
	\centering
	\includegraphics[width=2.8in]{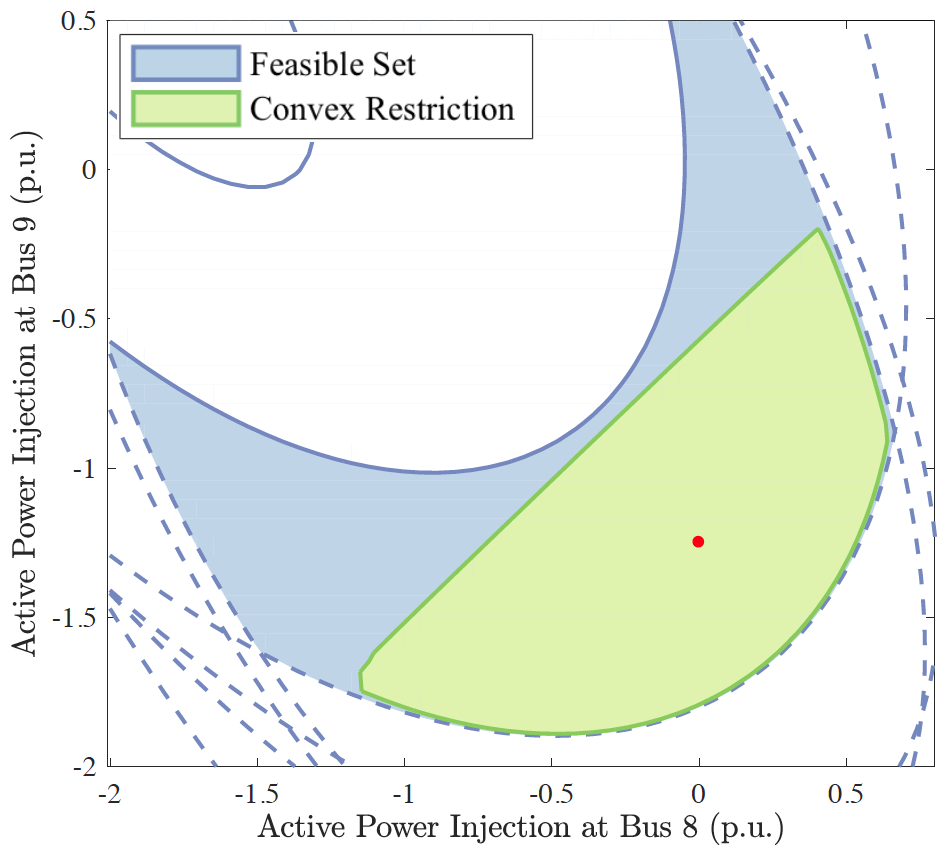}
	\caption{Convex restriction of feasible active power injection set in 9 bus system with the voltage limit of 1\% deviation from the base operating point. Red dot denotes the base point. Solid blue lines show voltage magnitude upper limits and dashed blue lines show voltage magnitude lower limits.}
	\label{fig_test9}
\end{figure}

Figure \ref{fig_test9} shows convex restriction for a modified 9 bus system. The voltage magnitude limits were set to 1\% deviation in order to create a clear non-convexity in the plot. The convex restriction was plotted by testing the feasibility of the constraint by checking violation of any operational limits. Figure \ref{fig_9bus} shows test results in a larger system for IEEE 14 bus, 30 bus, 39 bus, and 118 bus systems. The operational limits were provided in pglib library v19.01 without any modification. The results showed that the convex restriction is tight along some of the boundaries compared to the true feasibility set.
}



\section{Conclusion}
This paper proposed the convex restriction of a general feasibility set and presented its application to power flow equations with operational constraints. These results give new insights and understandings of power flow feasibility sets as a counterpart to the convex relaxation. The convex restriction of power flow feasibility set was constructed in a closed-form expression with convex quadratic constraints. The reliability of the power grid is the top priority in the operation and analysis, and the convex restriction gives a guarantee for a steady-state solution that respects operational constraints. Cross section plots of the Convex restriction in IEEE test cases showed that our construction is very close to the true feasible region along some of the boundaries. For future works, our closed-form expression can replace power flow equations to design tractable algorithms in OPF and steady-state security assessment.

\appendix
\section{Appendix}
\label{appendix}
The bounds over the self-mapping set used in the convex restriction of power flow feasibility sets are listed here. The self-mapping set forms an intersection of intervals given by $\varphi_l\in[\ubar{\varphi}_l,\, \obar{\varphi}_l]$ and $v_i\in[\ubar{v}_i,\, \obar{v}_i]$ for all $l\in\mathcal{E}$ and $i\in\mathcal{N}$. These are results directly from Lemma \ref{lemma_gconvex} with envelopes presented in Corollary 
\ref{cor_quad_env}, \ref{cor_bilinear_env} and \ref{cor_trig_env}. $\rho_1=\rho_2=1$ was used for bounding bilinear functions.

\subsection{Interval bound for cosine function}
$g_l^{\cos}=\cos\tilde{\varphi}_l-1$ over $\varphi_l\in[\ubar{\varphi}_l,\,\obar{\varphi}_l]$ is bounded by the following inequalities for all $l\in\mathcal{E}$:
\begin{equation*}
\begin{aligned}
\obar{g}_l^{\cos}&\geq 0, \ \ \ubar{g}_l^{\cos}\leq -\frac{(\varphi_{i,l}-\varphi_{0,l})^2}{2}.
\end{aligned}
\end{equation*}
where $\varphi_{i,l}\in\{\obar{\varphi}_l,\ubar{\varphi}_l\}$.

\subsection{Interval bound for sine function}
$g_l^{\sin}=\sin\tilde{\varphi}_l$ over $\varphi_l\in[\ubar{\varphi}_l,\,\obar{\varphi}_l]$ is bounded by the following inequalities for all $l\in\mathcal{E}$:
\begin{equation*}
\begin{aligned}
\obar{g}_{i,l}^{\sin}&\geq (\varphi_{i,l}-\varphi_{0,l})+\left(\frac{\sin\varphi^\textrm{min}_l-\varphi^\textrm{min}_l}{(\varphi^\textrm{min}_l)^2}\right)(\varphi_{i,l}-\varphi_{0,l})^2 \\
\ubar{g}_{i,l}^{\sin}&\leq (\varphi_{i,l}-\varphi_{0,l})+\left(\frac{\sin\ubar{\varphi}^\textrm{max}_l-\ubar{\varphi}^\textrm{max}_l}{(\ubar{\varphi}^\textrm{max}_l)^2}\right)(\varphi_{i,l}-\varphi_{0,l})^2 \\
\ubar{\varphi}_l &\leq\ubar{\varphi}_l^\textrm{max}, \ \obar{\varphi}_l\geq\obar{\varphi}_l^\textrm{min}.
\end{aligned}
\end{equation*}
where $\varphi_{i,l}\in\{\obar{\varphi}_l,\ubar{\varphi}_l\}$.

\subsection{Interval bound for bilinear function}
$g^{vv}=v_l^\textrm{f}v_l^\textrm{t}-v_{0,l}^\textrm{f}v_{0,l}^\textrm{t}$ over $v_i\in[\ubar{v}_i,\,\obar{v}_i]$ is bounded by the following inequalities for all $l\in\mathcal{E}$:
\begin{equation*}
\begin{aligned}
\obar{g}_{j,l}^{vv}&\geq \frac{1}{4}(\Delta v_{j,l}^\textrm{f}+\Delta v_{j,l}^\textrm{t})^2+v_{0,l}^\textrm{f}\Delta v_{j,l}^\textrm{t}+\Delta v_{j,l}^\textrm{f}v_{0,l}^\textrm{t} \\ 
\ubar{g}_{j,l}^{vv}&\leq -\frac{1}{4}(\Delta v_{j,l}^\textrm{f}-\Delta v_{j,l}^\textrm{t})^2+v_{0,l}^\textrm{f}\Delta v_{j,l}^\textrm{t}+\Delta v_{j,l}^\textrm{f}v_{0,l}^\textrm{t} \\
\ubar{g}_l^{vv}&\leq\ubar{g}_{j,l}^{vv}, \ \obar{g}_l^{vv}\geq\obar{g}_{j,l}^{vv}.
\end{aligned}
\end{equation*}
for each $(v_{j,l}^\textrm{f},\,v_{j,l}^\textrm{t})\in\{(\obar{v}_l^\textrm{f},\,\obar{v}_l^\textrm{t}),\, (\obar{v}_l^\textrm{f},\,\ubar{v}_l^\textrm{t}),\, (\ubar{v}_l^\textrm{f},\,\obar{v}_l^\textrm{t}),\,(\ubar{v}_l^\textrm{f},\,\ubar{v}_l^\textrm{t})\}$ and $\Delta v_l=v_l-v_{0,l}$ denotes difference respect to the base point.

\subsection{Interval bound for $v^\textrm{f}v^\textrm{t}\cos\varphi$}
$g^C=v^\textrm{f} v^\textrm{t}\cos\varphi$ and $g^C=v^\textrm{f} v^\textrm{t}\cos\varphi-v^\textrm{f}_0v^\textrm{t}-v^\textrm{f}v^\textrm{t}_0$ over $v_i\in[\ubar{v}_i,\,\obar{v}_i]$ and $\varphi_l\in[\ubar{\varphi}_l,\,\obar{\varphi}_l]$ are bounded by the following inequalities for all $l\in\mathcal{E}$:
\begin{equation*}
\begin{aligned}
\obar{\psi}_{\mathcal{P},l}^C&\geq \obar{g}_{j,l}^{vv}+v^\textrm{f}_{0,l}v^\textrm{t}_{0,l}\\ 
\ubar{\psi}_{\mathcal{P},l}^C&\leq-\frac{1}{4}(g_{j,l}^{vv}-g_l^{\cos})^2+v^\textrm{f}_{0,l}v^\textrm{t}_{0,l}g_l^{\cos}+g_{j,l}^{vv}+v^\textrm{f}_{0,l}v^\textrm{t}_{0,l}
\end{aligned}
\end{equation*}
\begin{equation*}
\begin{aligned}
\obar{g}_{\mathcal{P},l}^C&\geq \obar{g}_{j,l}^{vv}+v^\textrm{f}_{0,l}v^\textrm{t}_{0,l}-v^\textrm{f}_{0,l}v^\textrm{t}_{j,l}-v^\textrm{f}_{j,l}v^\textrm{t}_{0,l}\\ 
\ubar{g}_{\mathcal{P},l}^C&\leq-\frac{1}{4}(g_{j,l}^{vv}-g_l^{\cos})^2+v^\textrm{f}_{0,l}v^\textrm{t}_{0,l}g_l^{\cos}+g_{j,l}^{vv}+v^\textrm{f}_{0,l}v^\textrm{t}_{0,l} \\ 
&\hskip 4em -v^\textrm{f}_{0,l}v^\textrm{t}_{j,l}-v^\textrm{f}_{j,l}v^\textrm{t}_{0,l}.
\end{aligned}
\end{equation*}
for each combination of $g_l^{vv}\in\{\obar{g}_l^{vv},\,\ubar{g}_l^{vv}\}$ and $g_{i,l}^{ \sin}\in\{\obar{g}_{i,l}^{ \cos},\,\ubar{g}_{i,l}^{ \cos}\}$.

\subsection{Interval bound for $v^\textrm{f}v^\textrm{t}\sin\varphi$}

$\psi^S=v^\textrm{f} v^\textrm{t}\sin\varphi$ and $g^S=v^\textrm{f} v^\textrm{t}\sin\varphi-v^\textrm{f}_0v^\textrm{t}_0\varphi$ over $v_i\in[\ubar{v}_i,\,\obar{v}_i]$ and $\varphi_l\in[\ubar{\varphi}_l,\,\obar{\varphi}_l]$ are bounded by the following inequalities for all $l\in\mathcal{E}$:
\begin{equation*}
\begin{aligned}
\obar{\psi}_{\mathcal{P},l}^S&\geq \frac{1}{4}(g_l^{vv}+g_{i,l}^{ \sin})^2+v_{0,l}^\textrm{f}v_{0,l}^\textrm{t}g_{i,l}^{\sin} \\ 
\ubar{\psi}_{\mathcal{P},l}^S&\leq-\frac{1}{4}(g_l^{vv}-g_{i,l}^{ \sin})^2+v_{0,l}^\textrm{f}v_{0,l}^\textrm{t}g_{i,l}^{\sin}
\end{aligned}
\end{equation*}
\begin{equation*}
\begin{aligned}
\obar{g}_{\mathcal{P},l}^S&\geq \frac{1}{4}(g_l^{vv}+g_{i,l}^{ \sin})^2+v_{0,l}^\textrm{f}v_{0,l}^\textrm{t}g_{i,l}^{\sin}-v^\textrm{f}_{0,l}v^\textrm{t}_{0,l}\varphi_{i,l} \\ 
\ubar{g}_{\mathcal{P},l}^S&\leq-\frac{1}{4}(g_l^{vv}-g_{i,l}^{ \sin})^2+v_{0,l}^\textrm{f}v_{0,l}^\textrm{t}g_{i,l}^{\sin}-v^\textrm{f}_{0,l}v^\textrm{t}_{0,l}\varphi_{i,l}.
\end{aligned}
\end{equation*}
for each combination of $g_l^{vv}\in\{\obar{g}_l^{vv},\,\ubar{g}_l^{vv}\}$ and $g_{i,l}^{ \sin}\in\{\obar{g}_{i,l}^{ \sin},\,\ubar{g}_{i,l}^{ \sin}\}$.

\subsection{Interval bound for $v^2$}

$\psi^Q=v^2$ and $g^Q=v^2-2v_0v$ over $v_i\in[\ubar{v}_i,\,\obar{v}_i]$ are bounded by the following inequalities for all $k\in\mathcal{N}$:
\begin{equation*}
\begin{aligned}
\obar{\psi}_{\mathcal{P},k}^Q&\geq v^2_k \\ 
\ubar{\psi}_{\mathcal{P},k}^Q&\leq 2v_0v_k-v_0^2.
\end{aligned}
\end{equation*}
\begin{equation*}
\begin{aligned}
\obar{g}_{\mathcal{P},k}^Q&\geq v^2_k-2v_0v_k \\ 
\ubar{g}_{\mathcal{P},k}^Q&\leq -v_0^2.
\end{aligned}
\end{equation*}
where $v_k\in\{\obar{v}_k,\,\ubar{v}_k\}$.

\ifCLASSOPTIONcaptionsoff
\newpage
\fi

\bibliographystyle{IEEEtran}
\bibliography{references}
\nocite{*}

\end{document}